\pgfplotsset{compat=1.8}
\renewcommand{\phi}{\varphi}
\renewcommand{\theta}{\vartheta}
\renewcommand{\epsilon}{\varepsilon}
\newcommand{\R}{\mathbb{R}}
\newcommand{\N}{\mathbb{N}}
\newcommand{\EE}{\mathcal{E}}
\newcommand{\RR}{\mathcal{R}}
\newcommand{\JJ}{\mathcal{J}}
\newcommand{\BB}{\mathcal{B}}
\newcommand{\NN}{\mathcal{N}}
\newcommand{\UU}{\mathcal{U}}
\newcommand{\PP}{\mathcal{P}}
\newcommand{\GG}{\mathcal{G}}
\newcommand{\vm}{v_\mathrm{m}}
\newcommand{\lsh}{\ell_\mathrm{sh}}
\newcommand{\RRsh}{\RR_\mathrm{sh}}
\newcommand{\Csh}{C_\mathrm{sh}}
\newcommand{\JJsh}{\mathcal{J}_\mathrm{sh}}
\DeclareMathOperator{\dist}{dist}
\DeclareMathOperator{\graph}{graph}
\DeclareMathOperator{\Span}{span}
\DeclareMathOperator{\cone}{cone}
\providecommand{\abs}[1]{\left\lvert#1\right\rvert}               
\providecommand{\norm}[1]{\left\lVert#1\right\rVert}    
\newcommand{\scal}[2]{\left\langle #1,#2\right\rangle}
\newcommand*{\dd}{\mathop{}\!\mathrm{d}}
\newtheorem{theorem}{Theorem}
\newtheorem{lemma}[theorem]{Lemma}
\newtheorem{prop}[theorem]{Proposition}
\newtheorem{corollary}[theorem]{Corollary}
\theoremstyle{definition}
\newtheorem{defin}[theorem]{Definition}
\newtheorem{remark}[theorem]{Remark}
\numberwithin{equation}{section}
\title[Stabilization of periodic sweeping processes]{Stabilization of periodic sweeping processes
	and asymptotic average velocity for soft locomotors with dry friction}
\date{}
\author[Giovanni Colombo, Paolo Gidoni and Emilio Vilches]{}
\subjclass{Primary: 70K42, 34A60; Secondary: 34D45, 37C25.}
\keywords{Sweeping process, running-periodic solutions, relative-periodic solutions, asymptotic stability, crawling locomotion.}
\thanks{\textit{Email addresses}: colombo@math.unipd.it, gidoni@utia.cas.cz, emilio.vilches@uoh.cl}
\thanks{G.C. was partially supported by
	Padua University grant SID 2018 ``Controllability,
	stabilizability and infimum gaps for control systems'', prot. BIRD 187147, and by GNAMPA of INdAM. P.G. was partially supported by by the GA\v CR--FWF grant 19-29646L. E.V. was partially supported by ANID-Chile under grant Fondecyt de Iniciaci\'on N$^{\circ}$ 11180098.}
\thanks{$^*$ Corresponding author}
\begin{document}
	
\maketitle

\centerline{\scshape Giovanni Colombo}
\smallskip
{\footnotesize
	\centerline{Department of Mathematics ``Tullio Levi-Civita''}
	\centerline{University of Padua}
	\centerline{via Trieste 63, 35121 Padova (Italy)}
} 

\medskip

\centerline{\scshape Paolo Gidoni$^*$}
\smallskip
{\footnotesize
	\centerline{Institute of Information Theory and Automation}
	\centerline{Czech Academy of Sciences}
	\centerline{Pod vodárenskou veží 4, CZ-182 08   Praha 8 (Czech Republic)}
}

\medskip

\centerline{\scshape Emilio Vilches}
\smallskip
{\footnotesize
	\centerline{Instituto de Ciencias de la Ingenier\'ia}
	\centerline{Universidad de O'Higgins}
	\centerline{Av. Libertador Bernardo O’Higgins 611, 2841959  Rancagua (Chile)}
}
\medskip

\begin{abstract}	
	We study the asymptotic stability of periodic solutions for sweeping processes defined by a polyhedron with translationally moving faces. Previous results are improved by obtaining a stronger $W^{1,2}$ convergence. Then we present an application to a model of crawling locomotion. Our stronger convergence allows us to prove the stabilization of the system to a running-periodic (or derivo-periodic, or relative-periodic) solution and the well-posedness of an average asymptotic velocity depending only on the gait adopted by the crawler. Finally, we discuss some examples of finite-time versus asymptotic-only convergence.
\end{abstract}

\section{Introduction}

Biological  locomotion is usually described  by recognising periodic patterns, or \emph{gaits}, in the movement of limbs or other body parts: the flapping of a fin for a fish,  the movement of the legs during a stride for a walking o running animal, the peristaltic wave for an earthworm.
Periodicity is however only an ideal regime. For instance, a standing start may require a transient phase until periodicity is reached; similarly an obstacle along the path may produce a deviation from the ideal periodic pattern, so that another transient is necessary to recover it.

This capability to reach and restore a periodic behaviour is a key feature to be reproduced in bio-mimetic robotic locomotors.  Firstly, for control and optimization purposes, it allows to focus directly on the limit cycle, neglecting the transient phase phase. Secondly, it guarantees the stability under perturbations of the locomotion strategy.

There are several ways to obtain such a convergence to periodicity.
One option is to actively enforce this property, building a suitable feedback-loop. In other situations, however, such behaviour is a spontaneous property of the systems. Namely, it is sufficient for the locomotor to keep repeating the same periodic actuation and the evolution of the system will asymptotically convergence to a limit cycle (which depends on the chosen input). Such (passive) stability is observed and investigated in several types of locomotion, such as walking \cite{HoWi}, passive walking \cite{Mak},  crawling with viscous friction \cite{EldJac} and swimming \cite{PoFeTa}.

In this paper,  we discuss the asymptotic stability of periodic solutions for the models of (soft) crawlers extensively discussed in \cite{Gid18} and illustrated in Figure \ref{fig:crawler}. The system can be briefly described as a chain of material points, each subject to a time-dependent dry friction, joined by elastic actuated links. As customarily in the modelling of crawling, the system is studied at the quasistatic limit; a mathematical discussion of this choice can be found in \cite{GidRiv}. 

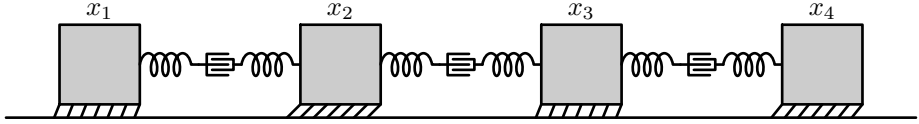
\begin{figure}[tb]
	\centering
	\begin{tikzpicture}[line cap=round,line join=round,x=4mm,y=4mm, line width=1pt, scale=0.88]
	\clip(-3,-2) rectangle (33,7);
	\draw [line width=1pt, fill=gray!40] (0,0.5)-- (3,0.5)--(3,3.5)-- (0,3.5)-- (0,0.5);
	\draw[decoration={aspect=0.5, segment length=1.5mm, amplitude=1.5mm,coil},decorate] (3,2)-- (5.2,2.);
	\draw[decoration={aspect=0.5, segment length=1.5mm, amplitude=1.5mm,coil},decorate] (6.8,2)-- (9.,2.);
	\draw (5.2,2)--(6.3,2);
	\draw (6.5,2)--(6.8,2);
	\draw (6.3,1.6)-- (5.5,1.6)--(5.5,2.4)--(6.3,2.4);
	\draw (5.7,1.8)-- (6.5,1.8)--(6.5,2.2)--(5.7,2.2);
	\draw [line width=1pt, fill=gray!40] (9,3.5)-- (9,0.5)-- (12,0.5)-- (12,3.5)-- (9,3.5);
	\draw[decoration={aspect=0.5, segment length=1.5mm, amplitude=1.5mm,coil},decorate] (12,2)-- (14.2,2.);
	\draw[decoration={aspect=0.5, segment length=1.5mm, amplitude=1.5mm,coil},decorate] (15.8,2)-- (18.,2.);
	\draw (14.2,2)--(15.3,2);
	\draw (15.5,2)--(15.8,2);
	\draw (15.3,1.6)-- (14.5,1.6)--(14.5,2.4)--(15.3,2.4);
	\draw (14.7,1.8)-- (15.5,1.8)--(15.5,2.2)--(14.7,2.2);
	\draw [line width=1pt, fill=gray!40] (18,3.5)-- (18,0.5)-- (21,0.5)-- (21,3.5)-- (18,3.5);
	\draw[decoration={aspect=0.5, segment length=1.5mm, amplitude=1.5mm,coil},decorate] (21,2)-- (23.2,2.);
	\draw[decoration={aspect=0.5, segment length=1.5mm, amplitude=1.5mm,coil},decorate] (24.8,2)-- (27.,2.);
	\draw (23.2,2)--(24.3,2);
	\draw (24.5,2)--(24.8,2);
	\draw (24.3,1.6)-- (23.5,1.6)--(23.5,2.4)--(24.3,2.4);
	\draw (23.7,1.8)-- (24.5,1.8)--(24.5,2.2)--(23.7,2.2);
	\draw [line width=1pt, fill=gray!40] (27,3.5)-- (27,0.5)-- (30,0.5)-- (30,3.5)-- (27,3.5);
	\draw [] (-2,0)-- (32,0);	
	\draw [] (0,0.5)-- (-0.2,0);
	\draw [] (0.5,0.5)-- (0.3,0);
	\draw [] (1,0.5)-- (0.8,0);
	\draw [] (1.5,0.5)-- (1.3,0);
	\draw [] (2,0.5)-- (1.8,0);
	\draw [] (2.5,0.5)-- (2.3,0);
	\draw [] (3,0.5)-- (2.8,0);
	\draw [] (9,0.5)-- (8.5,0);
	\draw [] (9.5,0.5)-- (9.,0);
	\draw [] (10,0.5)-- (9.5,0);
	\draw [] (10.5,0.5)-- (10.,0);
	\draw [] (11,0.5)-- (10.5,0);
	\draw [] (11.5,0.5)-- (11.,0);
	\draw [] (12,0.5)-- (11.5,0);
	\draw [] (18,0.5)-- (17.8,0);
	\draw [] (18.5,0.5)-- (18.3,0);
	\draw [] (19,0.5)-- (18.8,0);
	\draw [] (19.5,0.5)-- (19.3,0);
	\draw [] (20,0.5)-- (19.8,0);
	\draw [] (20.5,0.5)-- (20.3,0);
	\draw [] (21,0.5)-- (20.8,0);
	\draw [] (27,0.5)-- (26.6,0);
	\draw [] (27.5,0.5)-- (27.1,0);
	\draw [] (28,0.5)-- (27.6,0);
	\draw [] (28.5,0.5)-- (28.1,0);
	\draw [] (29,0.5)-- (28.6,0);
	\draw [] (29.5,0.5)-- (29.1,0);
	\draw [] (30,0.5)-- (29.6,0);
	\draw (1.5,4) node[] {$x_1$};
	\draw (10.5,4) node[] {$x_2$};
	\draw (19.5,4) node[] {$x_3$};
	\draw (28.5,4) node[] {$x_4$};
\end{tikzpicture}	
	
\caption{The model of soft crawler.}
\label{fig:crawler}
\end{figure}

First of all, let us clarify the form in which periodicity appears in locomotion: since 
a locomotor (hopefully) advances, clearly its evolution is not periodical in the canonical sense.
Let $\xi$ represent a generic body point of the locomotor in the reference configuration and denote with $x(\xi;t)$ its position at time $t$ in the deformed configuration. Periodicity in locomotion usually emerge in the form
\begin{equation}\label{eq:runningper}
\bar x(\xi;t)=\bar x_0+(t-t_0)\bar v_0+\bar p(\xi,t)
\end{equation}
where $\bar x_0$ and $\bar v_0$ are respectively the initial position and the \emph{average} velocity of a reference point of the locomotor (e.g. its barycentre), whereas the function $\bar p(\xi,\cdot)$  is periodic in time for every $\xi$.
Functions with the structure of $\bar x(\xi,\cdot)$ in \eqref{eq:runningper} are sometimes called \emph{derivo-periodic} \cite{And,AnBePa} (referring to the fact that they are primitives of periodic functions) or \emph{running-periodic} \cite{LeHoMi,Mar}. Moreover, they can be identified as a class within the more general family of \emph{relative-periodic} functions \cite{FaPaZo}, noticing that they can be decomposed as a periodic change in shape plus a translation in the position of the locomotor. This decomposition is classical in the modelling of locomotion \cite{KeMu} and will be crucial also in our paper, as we will soon show discussing equation \eqref{eq:sweep_form_intro}.

Let us now fix a gait $\GG$, which we identify with a prescribed actuation, with period $T$. Our aim is to prove the following two properties
\begin{itemize}
\item for every choice of admissible initial condition, the evolution of the system converge asymptotically to a running-periodic solution.
\item for every gait $\GG$, there is a unique velocity $v_0(\GG)$ such that all the running-periodic solutions compatible with the gait satisfy $\bar v_0=v_0(\GG)$.
\end{itemize}
These two results are the basis for the study of optimal gaits for our models, as it has been recently proposed in \cite{ColGid}.

Let us discuss more in detail the mathematical structure of our model.
As we show in Sec.~\ref{sec:model}, it is possible to reformulate the dynamics in the form
\begin{equation}\label{eq:sweep_form_intro}
\begin{cases}
\dot w\in -\NN_{K(t)}(w) \\
\dot y= \vm(t,-k^{-1} \dot w)
\end{cases}
\end{equation} 
where $\NN_C$ denotes the normal cone with respect to a convex set $C$.
The variable $w\in\R^{N-1}$ indirectly describes the shape of the locomotor in a frame of reference solidal with the crawler; the variable $y\in\R$ is the coordinate of the barycentre of the locomotor. Hence we would like to show that $w$ converges to a periodic function, whereas $y$ converges to a derivo-periodic function and
\begin{equation*}
\lim_{n\to+\infty}\frac{y(t_0+(n+1)T)-y(t_0+nT)}{T}=v_0(\GG)
\end{equation*} 
independently from the initial conditions. The properties of the crawler and of the gait are codified in the set $K(t)$, which is a polyhedron, and in the function $\vm$, whose values are defined as the minimizers of a dissipation functional on a certain subspace. 

The key technical contribution of the paper concerns therefore the asymptotic stability of the set of periodic solutions for dynamics of the form
\begin{equation} \label{eq:sweep_intro}
\dot z\in -\NN_{C(t)}(z)
\end{equation}
Differential inclusion of this form are known as \emph{Moreau's sweeping process}.
The existence and stability properties of periodic solution for \eqref{eq:sweep_intro}, when the set $C(t)$ changes periodically in time, has been first studied in \cite{Kre}, for the special case of a convex moving set $C(t)=C+c(t)$ in $\R^N$. In this framework, the existence of at least a periodic solution and the  global asymptotic stability of the set of periodic solution has been proven. 
These results has been recently generalized by \cite{GudMak} to the case of the finite intersection of convex moving sets. 

For our problem \eqref{eq:sweep_form_intro}, the set $K(t)$ is a polyhedron with translationally moving facets, hence included in the framework of \cite{GudMak}. However, as can be easily foreseen, the global asymptotic stability of the set of periodic solutions is not sufficient to our purposes, since it does not provide enough information on the behaviour of the derivative $\dot w$. Indeed, to show the well-posedness of the asymptotic velocity $v_0(\GG)$ we need the following two facts. First, as already shown in \cite{GudMak}, that all periodic solutions of \eqref{eq:sweep_intro} have the same derivative. Second, that the convergence to a periodic solution holds in a $W^{1,2}$-sense, compared with the $L^\infty$-convergence provided by the asymptotic stability in \cite{GudMak}.
More precisely, with Theorem \ref{th:attractor} we show that, for every solution $z(t)$ of \eqref{eq:sweep_intro}, where $C(t)$ is a polyhedron with translationally moving facets, there exists a periodic function $\bar z(t)$ such that
\begin{equation*}
	\lim_{q\to\infty} \| z(\cdot + q T)-\bar{z}(\cdot)\|_{W^{1,2}([t_0,t_0+T];\mathbb{R}^n)}=0.
\end{equation*}
where $q$ is an integer. This results can be then be applied to the second equation in \eqref{eq:sweep_form_intro}, to obtain $v_0(\GG)$.  Notice that, in addition to locomotion, this stronger convergence can be applied also to other contexts, for instance to improve the results for networks of elastoplastic springs in \cite{GudMak}.

\medbreak
A related problem is whether the convergence to a periodic behaviour occurs in finite time, or only asymptotically. Indeed, since our problem can be described by a sweeping process, uniqueness of solution is guaranteed in the future, but not in the past, so that periodicity may be reached exactly after a finite time. For our family of models, 
 finite-time convergence was already observed in \cite{GidDeS17}, where a few special gaits have been explicitly studied, noticing that a periodic behaviour was reached after just a single period.

It is not however easy to produce some general necessary condition for convergences in finite time. Some result has been proposed in \cite{GKMV} for the case of a moving polygon $C(t)=C+c(t)$, showing that under some conditions on $c(t)$, intuitively corresponding to a large continuous movement in a suitable direction, we have convergence within the first period. Such results have been recently generalized to higher dimension in \cite{GuMaRa}. Notice however that it is sufficient to replace the polygon $C$ with a sphere (o more generally a curved smooth boundary) to easily obtain a counterexample with asymptotic-only stability of a periodic solution \cite{GKMV}. 

In Section \ref{sec:finite} we provide some new results on this topic.
Firstly, with Theorem \ref{cor:cell} we show that if the set $C(t)$ is the Cartesian product of periodic intervals $[a_i(t),b_i(t)]$ we have convergence to a periodic solution within the first period.  Then, we provide two counterexamples showing asymptotic-only stability of the periodic solution for a moving polygon $C(t)=C+c(t)$.  The first one shows how, with a suitable choice of $c(t)$, it is always possible to construct examples of asymptotic-only convergence whenever $C$ has at least one acute angle. The second one complements the results in \cite{GKMV}, showing that, in special situations, also for a polyhedron a large continuous movement in a certain direction may not be sufficient.


\section{Notation and basic definitions}
The open ball centered at $x\in\mathbb{R}^n$ and radius $r>0$ is denoted by $\BB(x,r)$. The normal cone of convex analysis to a convex set $C$
at $x\in C$ is denoted by $\NN_C(x)$. The distance of $x$ from a set $C$ is $\dist(x,C):=\inf \{ |x-y| : y\in C\}$.  Given a function $F(t,x)$, convex in the variable $x$, we denote with $\partial_x F$ the subdifferential with respect to the second variable in the classical sense of convex analysis. 
  
We consider set-valued maps $t\mapsto C(t)$ satisfying the following assumptions.

\begin{enumerate}[label=\textup{(C\arabic*)}]
\item \label{hyp:C1} At every time $t\in \R$, the set $C(t)$ is  nonempty. Moreover, there exist $b_1, \dots, b_m$ vectors in $\R^n$, and $c_1,\dots c_m, \colon \R\to \R$  Lipschitz continuous, $T$-periodic functions such that 
\begin{equation*}\label{eq:polyhed}
	C(t):=\{z\in \R^n: \scal{b_i}{z}\leq c_i(t) \quad \text{for every $i=1,\dots , m$} \}.
\end{equation*}
\item \label{hyp:C2} At every time $t\in \R$, the set $C(t)$ is compact.
\end{enumerate}

Notice that \ref{hyp:C1} implies in particular that $t\mapsto C(t)$ is $T$-periodic and Lipschitz continuous (with respect to the Hausdorff distance). Moreover, its graph on an interval $[t_0,t_1]$, namely $\graph (C, [t_0,t_1]):=\{ (t,x):
x\in C(t), t\in [t_0,t_1] \}$, is closed.

For every state $z\in C(t)$, we define the active set of constraints for the polyhedron at time $t$ as
\begin{equation}\label{eq:activeconstr}
	J(t,z)=\bigl\{i\in\{1,\dots,m\}: \scal{b_i}{z}= c_i(t) \bigr\},
\end{equation}
where we understand that $J(t,z)=\emptyset$ if $z$ belongs to the interior of $C(t)$.
Together with the set of active constraints at a point $z\in C(t)$, we consider also the set of ``active faces''
at time $t$, namely
\begin{equation}\label{def_faces}
	\mathcal{F}(t):=\{J(t,z): z\in C(t)\},
	\end{equation}
that is the set of all subsets of constraints that are active at some $z\in C(t)$. Using $\mathcal{F}(t)$, we define also define the following property characterizing the evolution of $C(t)$.

\begin{defin}[LPAF] We say that the property of \emph{local permanence of the active faces}, briefly LPAF, holds at time $t$ if there exists a neighbourhood
	$\UU_t$ of $t$ such that for every $s\in \UU_t$ we have $\mathcal{F}(s)=\mathcal{F}(t)$.
\end{defin}
Of course the polyhedron $C(t)$ can be seen as the intersection of $m$ halfplanes translating in time, namely
\begin{equation}\label{eq:eqrepr}
C(t)=\bigcap_{i=1}^m \big(\{ z\in\mathbb{R}^n : \langle b_i,z\rangle \le 0\} + c_i(t) b_i\big)=
:\bigcap_{i=1}^m \left( C_i^0 + c_i(t) b_i\right),
\end{equation}
where $C_i^0$ denotes the closed halfplane $\langle b_i,z\rangle \le 0$.
Some constraint qualification assumptions will be supposed on $C(t)$. 

\begin{defin}[LICQ] We say that the \emph{linear-independence constraint-qualification}, briefly LICQ,
	is valid at $t$ if for each $z\in C(t)$ the family of vectors $b_i,\, i\in J(t,z)$, is linearly independent.
\end{defin}
We observe that the validity of LICQ at $t\in \R$ implies that there exists a constant $\gamma >0$, independent of 
$z\in C(t)$, such that
\begin{equation}\label{eq:revtrineq}
\sum_{i\in J(t,z)} \lambda_i |b_i| \le \gamma \left|  \sum_{i\in J(t,z)} \lambda_i b_i  \right|\quad\text{for all } \lambda_i\ge 0.
\end{equation}
Moreover, it is easy to see that, for each $t\in \R$, (C1) and the validity of LICQ at $t$ imply LPAF at $t$.

\smallskip

We introduce two more assumptions on the regularity of the shape of $C(t)$.
\begin{enumerate}[start=3,label=\textup{(C\arabic*)}]
\item \label{hyp:C3} At almost every time $t\in \R$, the set $C(t)$ satisfies LICQ.
\item \label{hyp:C4} At almost every time $t\in \R$, the set $C(t)$ satisfies LPAF; in other words, the set 
$\mathcal{F}(t)$, that was defined at \eqref{def_faces}, is locally constant on an open set of full measure.
\end{enumerate}
It is easy to see that (C3) implies (C4), while it may happen that (C1) and (C2) are valid,
but (C4) fails. For example, consider a trapezoid that, thanks to a translation of one edge, 
becomes a triangle on a Cantor set with positive measure.

\medbreak

Our investigation will be devoted to the $T$-periodic polyhedral sweeping process
\begin{equation}\label{eq:sweepingproc}
\dot z \in-\NN_{C(t)}(z)
\end{equation}
 A solution of \eqref{eq:sweepingproc} is an absolutely continuous function $z(\cdot)$, defined on an interval $I$, that
satisfies the equation a.e. It is well known (see, e.g., \cite{CT}), that \eqref{eq:sweepingproc} together with an admissible initial condition $z(t_0)=z_0\in C(t_0)$
admits one and only one \textit{forward-in-time} solution.

\section{Asymptotic behavior for a class of periodic sweeping processes} \label{sec:theory}

The following is the main result of this section. The initial time $t_0$ is fixed once for all.
\begin{theorem}\label{th:attractor}
Consider the $T$-periodic polyhedral sweeping process \eqref{eq:sweepingproc}, where \ref{hyp:C1},
\ref{hyp:C2} and \ref{hyp:C4} hold.
Let $z\colon [t_0,+\infty)\to\mathbb{R}^n$ be a solution of \eqref{eq:sweepingproc}. Then there exists a $T$-periodic solution $\bar{z}$
of \eqref{eq:sweepingproc}  such that
\begin{equation}\label{eq:pente}
\lim_{q\to\infty} \| z(\cdot + q T)-\bar{z}(\cdot)\|_{W^{1,2}([t_0,t_0+T];\mathbb{R}^n)}=0.
\end{equation}
\end{theorem}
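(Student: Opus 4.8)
The plan is to combine the known $L^\infty$-stability of the set of periodic solutions with a careful exploitation of the $W^{1,2}$-structure of sweeping processes. First I would recall that by \cite{GudMak}, since $C(t)$ is a finite intersection of translating halfspaces, there exists at least one $T$-periodic solution $\bar z$ of \eqref{eq:sweepingproc}, that every periodic solution has the same derivative a.e., and that the solution $z(t)$ starting from the given initial condition satisfies $\|z(t+qT)-\bar z(t)\|_\infty \to 0$ as $q\to\infty$, uniformly on $[t_0,t_0+T]$ (after possibly adjusting $\bar z$ to the correct translate/selection). So the $L^\infty$ part of \eqref{eq:pente} is in hand; the work is upgrading it to convergence of the derivatives in $L^2([t_0,t_0+T];\R^n)$.

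The key tool is the standard contraction estimate for sweeping processes: if $z_1,z_2$ solve \eqref{eq:sweepingproc}, then $\frac{d}{dt}\tfrac12|z_1-z_2|^2 = \scal{\dot z_1 - \dot z_2}{z_1-z_2} \le 0$ by monotonicity of the normal cone to a moving convex set, so $t\mapsto|z_1(t)-z_2(t)|$ is nonincreasing. Applying this with $z_1(t)=z(t+qT)$ and $z_2=\bar z$ (both solutions, by periodicity of $C$), the sequence $q\mapsto \|z(\cdot+qT)-\bar z\|_{L^\infty}$ is nonincreasing and, by the cited result, tends to $0$. The plan is now to integrate the infinitesimal inequality more sharply: over one period,
\begin{equation*}
\tfrac12|z(t_0+(q+1)T)-\bar z(t_0)|^2 - \tfrac12|z(t_0+qT)-\bar z(t_0)|^2 = \int_{t_0}^{t_0+T}\scal{\dot z(s+qT)-\dot{\bar z}(s)}{z(s+qT)-\bar z(s)}\dd s.
\end{equation*}
The left side telescopes to something summable in $q$ (the squared norms decrease to a limit), so the integrals on the right are summable. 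Thus it remains to bound $\|\dot z(\cdot+qT)-\dot{\bar z}\|_{L^2}^2$ by a constant times the absolute value of that integral, i.e. to show a coercivity estimate of the form $\scal{\dot z_1-\dot z_2}{z_1-z_2} \le -\delta\,|\dot z_1-\dot z_2|^2$ in an averaged sense along trajectories. This is exactly where \ref{hyp:C4} (LPAF on a set of full measure, equivalently $\mathcal F(t)$ locally constant a.e.) and the polyhedral structure enter: on each maximal interval where the active faces are constant, $\dot z$ is driven only by the motion $\dot c_i$ of the currently active facets, and a reverse-type inequality analogous to \eqref{eq:revtrineq} — or a direct argument using that the velocity of a point constrained to a face with normals $b_i$, $i\in J$, equals the orthogonal projection correction determined by $\dot c_i$ — gives the quantitative bound. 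One has to be careful that LICQ is only assumed a.e.; under \ref{hyp:C4} alone, the right substitute is that $\mathcal F(t)$ is constant on an open full-measure set, so one argues face-by-face on that set and shows the bad set contributes nothing.

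The main obstacle I anticipate is precisely this coercivity step under the weak hypothesis \ref{hyp:C4}: without LICQ one cannot directly invoke \eqref{eq:revtrineq}, and the velocity $\dot z$ need not depend in a linearly bounded way on the constraint velocities when active gradients are dependent. The way around it is to work with the active faces rather than the active constraints: on the open set of full measure where $\mathcal F(t)$ is locally constant, $z(t)$ lies in (the relative interior of, for a.e.\ $t$) a fixed affine face $F$, $\dot z(t)$ is tangent to $F$, and the sweeping dynamics restricted to $F$ is itself a sweeping process on a lower-dimensional moving polyhedron for which one can iterate; alternatively one uses that $-\dot z(t)$ is the element of minimal norm in $\NN_{C(t)}(z(t))$ compatible with the motion, giving $|\dot z(t)| \le L$ a.e.\ (the Lipschitz constant of $t\mapsto C(t)$) and, more importantly, the matching lower control $\scal{\dot z_1-\dot z_2}{z_1-z_2}\le -\delta |\dot z_1-\dot z_2|^2$ whenever $z_1(t),z_2(t)$ sit on the same face with that face's normals satisfying a uniform conic bound, which holds on the full-measure set. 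Once the summability of $\sum_q \|\dot z(\cdot+qT)-\dot{\bar z}\|_{L^2}^2$ is established, $\|\dot z(\cdot+qT)-\dot{\bar z}\|_{L^2}\to 0$ follows, and combined with the already-known uniform convergence of $z(\cdot+qT)$ to $\bar z$ this yields \eqref{eq:pente}.
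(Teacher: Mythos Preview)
Your proposal has a genuine gap at the coercivity step, and the paper's proof follows a substantially different route.

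\textbf{Where the proposal breaks down.} You want to upgrade the telescoping identity
\[
\tfrac12|z(t_0+(q+1)T)-\bar z(t_0)|^2 - \tfrac12|z(t_0+qT)-\bar z(t_0)|^2
= \int_{t_0}^{t_0+T}\scal{\dot z(s+qT)-\dot{\bar z}(s)}{z(s+qT)-\bar z(s)}\dd s
\]
into a bound of the form $\|\dot z(\cdot+qT)-\dot{\bar z}\|_{L^2}^2 \le C\cdot(\text{right-hand side in absolute value})$. But the pointwise inequality $\scal{\dot z_1-\dot z_2}{z_1-z_2}\le -\delta|\dot z_1-\dot z_2|^2$ is simply false for sweeping processes, even in one dimension: take $C(t)=(-\infty,c(t)]$ with $\dot c<0$, $z_1(t)=c(t)$ on the boundary and $z_2(t)$ in the interior; then $\dot z_1-\dot z_2=\dot c$, and the left side equals $\dot c\,(c(t)-z_2(t))$, which can be made arbitrarily small relative to $\dot c^2$ by taking $z_2(t)$ close to $c(t)$. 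Your fallback ``when $z_1,z_2$ sit on the same face'' is vacuous: on a translating polyhedron, two points with the same active set have the \emph{same} velocity, so both sides vanish. The interesting situation is precisely when the active sets differ, and there your argument gives nothing. No integrated version of the inequality is proved either, and \ref{hyp:C4} by itself does not supply the uniform conic constant you invoke --- that would need LICQ, which is \ref{hyp:C3}. In short, the passage from summability of the telescoping energies to summability (or even convergence) of $\|\dot z_q-\dot{\bar z}\|_{L^2}^2$ is asserted, not established.

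\textbf{What the paper does instead.} The paper avoids coercivity entirely. Using a lemma (local reduction to a play operator), it shows that under \ref{hyp:C4}, near any $(t,x)$ in the graph of $C$ the dynamics coincides with $\dot z\in -\NN_{\mathfrak{C}+v(\cdot)}(z)$ for a fixed polyhedron $\mathfrak{C}$ and a Lipschitz translation $v$. A finite cover of the graph of $\bar z$ by such neighbourhoods contains the graph of $z(\cdot+qT)$ for all large $q$ (by the $L^\infty$ convergence). On each patch the orthogonality identity $\scal{\dot w}{\dot w-\dot v}=0$ holds for every solution $w$; this gives a uniform $L^2$ bound on $\dot z(\cdot+qT)$, hence weak $L^2$ convergence of the derivatives to $\dot{\bar z}$, and then the same identity yields $\|\dot z(\cdot+qT)\|_{L^2}^2=\int\scal{\dot z(\cdot+qT)}{\dot v}\to\int\scal{\dot{\bar z}}{\dot v}=\|\dot{\bar z}\|_{L^2}^2$. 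Weak convergence plus convergence of norms gives strong $L^2$ convergence of the derivatives. Note that the paper only proves $\|\dot z(\cdot+qT)-\dot{\bar z}\|_{L^2}\to 0$, not the summability you are aiming for.
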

\begin{remark} \label{rem:gudmak}
 Theorem \ref{th:attractor} improves \cite[Theorem 4.3]{GudMak} (cf.~Theorem \ref{th:gudmak} below), where essentially the same convergence result is proved under the coarser topology of $L^\infty([t_0,t_0+T];\mathbb{R}^n)$. As we show in Section \ref{sec:model}, a stronger convergence is required for our application, namely the $L^1$-convergence of the derivatives.  In order to prove Theorem \ref{th:attractor}, we will employ the fact that \eqref{eq:sweepingproc} can be locally reduced 
to a play operator (i.e., to a case where the moving set is a pure translation), cf. Lemma~\ref{lem:red_play}, combined with ideas taken 
from the proof of Theorem 3.12 in \cite{Kre}, that allow to obtain the
strong convergence in $L^2$ of the derivatives.
\end{remark}

\medskip

Before to demonstrate the proof of Theorem \ref{th:attractor}, we present three technical lemmas.

\begin{lemma} \label{lem:indep_base}
	
	Assume \ref{hyp:C1}, fix $t\in [t_0,t_0+T]$ and assume that \textup{LICQ} holds for $C(t)$. Let $v\in\R^n$ and let $z\in C(t)$ be such that $v\in\NN_{C(t)}(z)$. Then there exists a unique choice of nonnegative numbers
	$\lambda_i$, $i=1,\ldots,m$, such that
	\begin{equation} \label{eq:reprv}
	v=\sum_{i=1}^m \lambda_i b_i,\;\text{ where }\lambda_i>0\Rightarrow i\in J(t,z).
	\end{equation}
	Moreover, the coefficients $\lambda_i$ depend only on $v$ and on the set of active faces 
	$\mathcal{F}(t)$ defined in \eqref{def_faces};
	in particular they are independent of the base point $z$, and of changes in $C(t)$ that do not create new faces, or remove existing ones.
Finally the coefficients $\lambda_i$ are Lipschitz continuous with respect to $v$.
\end{lemma}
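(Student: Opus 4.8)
The plan is to proceed in three steps: the representation and its uniqueness; the independence from the base point (from which the dependence only on $\mathcal{F}(t)$ will follow); and the Lipschitz dependence on $v$. For the first step I would use the standard description of the normal cone to a polyhedron: since $C(t)=\{z\in\R^n:\langle b_i,z\rangle\le c_i(t),\ i=1,\dots,m\}$, one has $\NN_{C(t)}(z)=\{\sum_{i\in J(t,z)}\lambda_i b_i:\lambda_i\ge0\}$, a purely polyhedral fact requiring no constraint qualification. Hence $v\in\NN_{C(t)}(z)$ at once yields nonnegative $\lambda_i$, $i\in J(t,z)$, with $v=\sum_{i\in J(t,z)}\lambda_i b_i$; setting $\lambda_i=0$ for the remaining indices gives \eqref{eq:reprv}. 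Uniqueness is then immediate from LICQ: the vectors $b_i$, $i\in J(t,z)$, are linearly independent — the property behind \eqref{eq:revtrineq} — so their coefficients in $v$ are uniquely determined, while the implication in \eqref{eq:reprv} forces $\lambda_i=0$ for $i\notin J(t,z)$.

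For the heart of the statement, the independence from the base point, suppose $v\in\NN_{C(t)}(z_1)$ and $v\in\NN_{C(t)}(z_2)$ with $z_1,z_2\in C(t)$. Each of $z_1,z_2$ maximizes the linear functional $\langle v,\cdot\rangle$ over $C(t)$, hence so does the midpoint $z_3:=\tfrac12(z_1+z_2)\in C(t)$; and a one-line computation from $\langle b_i,z_1\rangle\le c_i(t)$ and $\langle b_i,z_2\rangle\le c_i(t)$ shows $J(t,z_3)=J(t,z_1)\cap J(t,z_2)$. Writing the (unique) representation of $v$ associated with $z_3$ and noting that its support lies in $J(t,z_1)$, the uniqueness just established at $z_1$ forces the representations at $z_1$ and at $z_3$ to coincide as elements of $\R^m$; the same at $z_2$. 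Thus the coefficients agree for $z_1$ and $z_2$, and are in fact supported in $J(t,z_1)\cap J(t,z_2)$. The dependence only on $\mathcal{F}(t)$ follows: if $C'(t)$ is built from the same $b_i$ and has the same family of active faces, and $v\in\NN_{C'(t)}(z')$, then $J'(t,z')\in\mathcal{F}(t)$, so there is $\tilde z\in C(t)$ with $J(t,\tilde z)=J'(t,z')$; since the normal cone to a polyhedron depends only on the $b_i$ and the active set, $v\in\NN_{C(t)}(\tilde z)$ and the system $v=\sum_{i\in J'(t,z')}\lambda_i b_i$ computed in $C'(t)$ at $z'$ is literally the one computed in $C(t)$ at $\tilde z$, which by base-point independence agrees with the one at the original $z$.

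For the Lipschitz dependence on $v$, let $\Lambda(v)=(\lambda_1(v),\dots,\lambda_m(v))$ be the now single-valued map, with domain $D:=\bigcup_{z\in C(t)}\NN_{C(t)}(z)$; this set coincides with the polar of the recession cone of $C(t)$, hence is a convex cone. On each cone $\cone\{b_i:i\in J\}$ with $J\in\mathcal{F}(t)$ — simplicial, by LICQ — the map $\Lambda$ is the linear map $v\mapsto(B_J^\top B_J)^{-1}B_J^\top v$ extended by zeros, where $B_J$ has columns $b_i$, $i\in J$, so it is Lipschitz there with constant $L_J:=\norm{(B_J^\top B_J)^{-1}B_J^\top}$. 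Since $\Lambda$ is single-valued by the previous step, continuous, and piecewise linear over the finitely many cones $\cone\{b_i:i\in J\}$, $J\in\mathcal{F}(t)$, which cover the convex set $D$, a telescoping estimate along any segment $[v,v']\subseteq D$ (it meets only finitely many of these cones) shows $\Lambda$ is globally Lipschitz on $D$ with constant $\max_{J\in\mathcal{F}(t)}L_J$.

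The step I expect to be the real obstacle is the independence from the base point — specifically, the bookkeeping that guarantees the representation is genuinely the same element of $\R^m$, with the same support and the same coefficients, even when the active sets $J(t,z_1)$ and $J(t,z_2)$ differ; the midpoint trick together with uniqueness under LICQ is what closes this gap. The remaining ingredients are either standard polyhedral convex analysis (existence of the representation, the form of the normal cone) or the routine fact that a continuous map which is piecewise affine over a finite polyhedral subdivision of a convex set is globally Lipschitz.
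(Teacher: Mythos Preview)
Your proof is correct and follows essentially the same route as the paper's: normal-cone formula plus LICQ for existence and uniqueness, a convexity argument on the set of base points for independence, and piecewise linearity over the finitely many normal cones for Lipschitz continuity. The only variation is cosmetic --- where the paper observes that the set of base points for $v$ is a face of $C(t)$ and compares via a relative-interior point, you compare two arbitrary base points via their midpoint, which is the same idea rendered pairwise.
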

\begin{proof}
	First of all, we recall that, since $C(t)$ is a convex polyhedron, we have 
	\begin{equation*}
	\NN_{C(t)}(z)=\cone\{b_i, i\in J(t,z)\}
	\end{equation*}
	The first part of the lemma follows directly from this and LICQ. 
	
	We then observe that the set of the points $x\in C(t)$ such that $v\in \NN_{C(t)}(x)$ is exactly a face of $C(t)$. Since the active constraints on the interior of the face are active on the whole face, by the first part of the lemma we deduce that the representation \eqref{eq:reprv} is independent of the base point $z$. 
	
	For $v$ varying in the normal cone to the relative interior of each face, the coefficients $\lambda$'s
	that appear in the representation \eqref{eq:reprv} are, by LICQ, the solution of a linear system with maximal rank (that is equal to the number of active
	constraints). Thus they are a Lipschitz, actually, a linear, function of the datum $v$. Moreover, the boundary of each normal cone matches
	with the normal cone at the corresponding neighbouring faces. 
	Hence, since there is only a finite number of faces, the mapping $v\mapsto \lambda (v)$ is piecewise linear, with a finite number of linear components, each active exactly on a cone. In particular, this implies that it is globally Lipschitz.
\end{proof}

\begin{lemma} \label{lem:meas_sel}  Let $C(t)$ satisfy \ref{hyp:C1} and \ref{hyp:C3}.
 Let $z\colon[t_0,+\infty)\to \mathbb{R}^n$ be a solution of \eqref{eq:sweepingproc}. Then there exists a unique collection of nonnegative measurable
functions $\lambda_i$, $i=1,\ldots ,m$, satisfying, for a.e.~$t\in [t_0,+\infty)$,
\begin{equation}\label{eq:zdot}
\dot{z}(t)=-\sum_{i=1}^m \lambda_i (t) b_i
\end{equation}
and
\begin{equation}\label{eq:lambda_pos}
\lambda_i(t)>0 \Rightarrow i\in J(t,z(t)).
\end{equation}
\end{lemma}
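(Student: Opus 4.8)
The plan is to produce the coefficients $\lambda_i(t)$ first pointwise, via Lemma~\ref{lem:indep_base}, and then to check that they are measurable by freezing the active constraint set on a measurable decomposition of $[t_0,+\infty)$; the substance of the proof lies in the measurability step. Concretely, let $N\subset[t_0,+\infty)$ be a null set outside of which $\dot z(t)$ exists, $\dot z(t)\in-\NN_{C(t)}(z(t))$, and LICQ holds for $C(t)$ --- such an $N$ exists because $z$ is absolutely continuous, by the definition of a solution of \eqref{eq:sweepingproc}, and by~\ref{hyp:C3}. For $t\notin N$ we have $-\dot z(t)\in\NN_{C(t)}(z(t))$ and LICQ at $t$, so Lemma~\ref{lem:indep_base} (applied at $t$, using the $T$-periodicity of $C$) yields a \emph{unique} $(\lambda_1(t),\dots,\lambda_m(t))\in[0,+\infty)^m$ with $-\dot z(t)=\sum_{i=1}^m\lambda_i(t)b_i$ and $\lambda_i(t)>0\Rightarrow i\in J(t,z(t))$. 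Set $\lambda_i\equiv 0$ on $N$. Then \eqref{eq:zdot}--\eqref{eq:lambda_pos} hold a.e., and any other family satisfying them agrees with $(\lambda_i)$ for every $t\notin N$ by the uniqueness clause of Lemma~\ref{lem:indep_base}, which settles uniqueness.

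It remains to show each $\lambda_i$ is Lebesgue measurable. For $S\subseteq\{1,\dots,m\}$ put $A_S:=\{t\in[t_0,+\infty)\setminus N:\ J(t,z(t))=S\}$. Since $z$ and the $c_i$ are continuous, each set $\{t:\scal{b_i}{z(t)}=c_i(t)\}$ is closed, so the map $t\mapsto J(t,z(t))$ is measurable and the finitely many sets $A_S$ are measurable and cover $[t_0,+\infty)\setminus N$. Fix $S$ with $A_S\neq\emptyset$ and let $B_S$ be the $n\times|S|$ matrix with columns $b_i$, $i\in S$; since for any $t\in A_S$ we have $t\notin N$ and $J(t,z(t))=S$, LICQ at $t$ shows that the columns of $B_S$ are linearly independent, so $B_S^\top B_S$ is invertible. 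For $t\in A_S$, condition~\eqref{eq:lambda_pos} forces $\lambda_i(t)=0$ whenever $i\notin S$, hence $-\dot z(t)=B_S\,(\lambda_i(t))_{i\in S}$ and therefore $(\lambda_i(t))_{i\in S}=-(B_S^\top B_S)^{-1}B_S^\top\dot z(t)$, a fixed linear image of $\dot z(t)$. As $\dot z$ is measurable, $\lambda_i$ is measurable on each $A_S$, hence on all of $[t_0,+\infty)$; nonnegativity is inherited from the pointwise construction.

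I expect the only genuinely delicate point to be this measurability argument, in particular the observation that one should decompose the time axis according to the value of $J(t,z(t))$ --- which is legitimate precisely because that map is measurable --- rather than attempting to use the Lipschitz selection of Lemma~\ref{lem:indep_base} globally, since the representing linear map varies from face to face. An alternative would be to extract a measurable selection from the a.e.\ single-valued multifunction $t\mapsto\{\lambda\in[0,+\infty)^m:\ \sum_{i}\lambda_i b_i=-\dot z(t),\ \lambda_i>0\Rightarrow i\in J(t,z(t))\}$ via a standard measurable-selection theorem; the explicit left-inverse formula above has the advantage of making the selection transparent and reproving uniqueness simultaneously.
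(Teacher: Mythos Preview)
Your proof is correct, but the route differs from the paper's. The paper exploits the clause in Lemma~\ref{lem:indep_base} that the coefficients depend only on $v$ and on the set of active faces $\mathcal{F}(t)$ --- not on the particular base point --- to build a two-variable map $\tilde\lambda_i(t,v)$ that is measurable in $t$ (because $t\mapsto\mathcal{F}(t)$ is measurable under \ref{hyp:C1}) and Lipschitz in $v$; measurability of $\lambda_i(t)=\tilde\lambda_i(t,-\dot z(t))$ then follows from a standard Carath\'eodory-type composition argument. So, contrary to your closing remark, the paper \emph{does} use the global Lipschitz selection of Lemma~\ref{lem:indep_base}, and its value is precisely that it sidesteps any face-by-face bookkeeping. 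Your decomposition by the solution-dependent active set $J(t,z(t))$ together with the explicit left inverse $(B_S^\top B_S)^{-1}B_S^\top$ is more concrete and self-contained; it has the advantage of making the linear dependence on $\dot z$ manifest on each piece, at the cost of tracking the solution rather than only the geometry of $C(t)$. Both arguments are valid and yield the same conclusion.
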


\begin{proof}
Using Lemma \ref{lem:indep_base}, we construct for a.e.~$t\in [t_0,+\infty)$ and every $v\in\R^n$,
the unique nonnegative maps $\tilde\lambda_i(t,v)$. By  \ref{hyp:C1}, we deduce that the map 
$t\mapsto\mathcal{F}(t)$, defined according to \eqref{def_faces}, is measurable, 
hence also $t\mapsto\tilde\lambda_i(t,v)$ is measurable for every 
$v\in \R$. Since $v\mapsto\tilde\lambda_i(t,v)$ is Lipschitz continuous at almost every $t$ and $z(t)$ is 
absolutely continuous, we deduce that the maps 
$t\to \lambda_i(t):=\tilde\lambda_i(t,\dot z(t))$ are measurable. 
\end{proof}

\begin{lemma} \label{lem:red_play}
Let $C(t)$ satisfy \ref{hyp:C1} and \ref{hyp:C4}. 
Then the sweeping process \eqref{eq:sweepingproc} is locally of 
play-type. More precisely, for a.e.~$t\in [t_0,t_0+T]$ and all
$x\in C(t)$ there exist $\delta_1>0$, $\delta_2>0$, a Lipschitz function 
$v\colon(t-\delta_1,t+\delta_1)\to \mathbb{R}^n$,
and a polyhedron $\mathfrak{C}=\mathfrak{C}(x)$ such that, for each
$(\tau,y)\in (t-\delta_1,t+\delta_1)\times \BB(x,\delta_2)$ the solution of
\begin{equation*}
\left\{
\begin{aligned}
\dot{z}(\cdot)&\in -\NN_{C(\cdot)} (z(\cdot))\\
z(\tau)&=y
\end{aligned}
\right.
\end{equation*}
(that exists if and only if $y\in C(\tau)$) is also a solution of
\begin{equation*}
\dot{z}(\cdot) \in -\NN_{\mathfrak{C}+v(\cdot)}(z(\cdot))\;\text{ a.e.~in } (t-\delta_1,t+\delta_1).
\end{equation*}
\end{lemma}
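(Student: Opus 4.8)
The plan is to use \ref{hyp:C4} to reduce, in a neighbourhood of a fixed state, the moving polyhedron to a translate of a fixed one. Fix a time $t$ at which LPAF holds (by \ref{hyp:C4} this is the case for a.e.\ $t$) and fix $x\in C(t)$; set $J_0:=J(t,x)$ as in \eqref{eq:activeconstr} and let $\mathfrak{C}:=\bigcap_{i\in J_0}C_i^0$ (with $\mathfrak{C}:=\R^n$ if $J_0=\emptyset$), a fixed polyhedral cone. The guiding idea is that near $x$ only the constraints indexed by $J_0$ are ever active, and that, since LPAF makes these faces persist as $t$ varies, the corresponding halfspaces $C_i^0+c_i(\tau)b_i$ of \eqref{eq:eqrepr} can be realized simultaneously as a single translate $\mathfrak{C}+v(\tau)$.

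First I would fix the neighbourhoods and the translation. Since $\scal{b_i}{x}<c_i(t)$ for $i\notin J_0$, the $c_i$ are Lipschitz and LPAF holds at $t$, there are $\delta_1',\delta_2'>0$ such that, for every $\tau$ with $|\tau-t|<\delta_1'$, one has $\mathcal{F}(\tau)=\mathcal{F}(t)$ and $\scal{b_i}{w}<c_i(\tau)$ for all $w\in\BB(x,\delta_2')$ and all $i\notin J_0$. I would then define $v(\tau)$, for $|\tau-t|<\delta_1'$, as the least-norm solution of the linear system $\scal{b_i}{v}=c_i(\tau)$, $i\in J_0$. This system is solvable for each such $\tau$: from $\mathcal{F}(\tau)=\mathcal{F}(t)$ we get $J_0\in\mathcal{F}(\tau)$, so by \eqref{def_faces} some point of $C(\tau)$ has exactly the constraints of $J_0$ active, and that point solves the system. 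Writing $v(\tau)=A^{+}\,(c_i(\tau))_{i\in J_0}$, with $A$ the (fixed) matrix whose rows are the $b_i$, $i\in J_0$, and $A^{+}$ its pseudoinverse, exhibits $v(\cdot)$ as the image under a linear map of the Lipschitz map $\tau\mapsto(c_i(\tau))_{i\in J_0}$, hence Lipschitz. Finally I would set $\delta_2:=\delta_2'/2$ and $\delta_1:=\min\{\delta_1',\,\delta_2'/(4\ell)\}$, where $\ell$ is a Lipschitz constant of $t\mapsto C(t)$ for the Hausdorff distance, finite by \ref{hyp:C1}.

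With these choices the two sweeping processes coincide on $\BB(x,\delta_2')$. For $|\tau-t|<\delta_1'$ one computes $\mathfrak{C}+v(\tau)=\bigcap_{i\in J_0}\{z:\scal{b_i}{z}\le\scal{b_i}{v(\tau)}\}=\bigcap_{i\in J_0}\{z:\scal{b_i}{z}\le c_i(\tau)\}$, so, combining this with the strict inequalities for $i\notin J_0$, for every $w\in\BB(x,\delta_2')$ one gets $w\in C(\tau)\iff w\in\mathfrak{C}+v(\tau)$; thus the two sets agree on $\BB(x,\delta_2')$. Moreover, for $w$ in this common set, $J(\tau,w)\subseteq J_0$, and since the normal cone to a polyhedron at a point is the conic hull of the gradients of the constraints active there — a standard fact requiring no constraint qualification — one obtains $\NN_{C(\tau)}(w)=\cone\{b_i:i\in J(\tau,w)\}=\NN_{\mathfrak{C}+v(\tau)}(w)$.

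The conclusion is then routine. Given $(\tau,y)\in(t-\delta_1,t+\delta_1)\times\BB(x,\delta_2)$ with $y\in C(\tau)$, the solution $z(\cdot)$ of \eqref{eq:sweepingproc} through $(\tau,y)$ is $\ell$-Lipschitz (a classical property of Moreau's process under \ref{hyp:C1}), so $|z(s)-x|<\delta_2'$ for every $s$ in $(t-\delta_1,t+\delta_1)$ belonging to its domain; on that set $z(s)\in\mathfrak{C}+v(s)$ and, for a.e.\ such $s$, $\dot z(s)\in-\NN_{C(s)}(z(s))=-\NN_{\mathfrak{C}+v(s)}(z(s))$ by the previous step, which is the assertion. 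I expect the main obstacle to be the construction of a \emph{Lipschitz} translation $v$ valid on a whole time-neighbourhood \emph{without} assuming \textup{LICQ}: this is the point where one must use that solving the fixed-matrix linear system remains possible throughout the neighbourhood — precisely the persistence of the face $J_0$ guaranteed by LPAF — and that its least-norm solution depends Lipschitz-continuously on the data; the rest is the bookkeeping that keeps trajectories inside the ball where the identification is valid.
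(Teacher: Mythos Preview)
Your proof is correct and follows essentially the same route as the paper's: both localize to the constraints $J_0=J(t,x)$, use LPAF to ensure the linear system $\scal{b_i}{v}=c_i(\tau)$, $i\in J_0$, remains solvable on a time-neighbourhood, and take the unique solution in $\Span\{b_i:i\in J_0\}$ (the paper states this directly, you obtain it via the pseudoinverse, which lands in the row space). The only cosmetic difference is that the paper takes $\mathfrak{C}=\tilde C(J_0,t)$ passing through $x$ with $v(t)=0$, whereas you take the cone $\bigcap_{i\in J_0}C_i^0$ through the origin; since in both cases $\mathfrak{C}+v(\tau)=\bigcap_{i\in J_0}\{z:\scal{b_i}{z}\le c_i(\tau)\}$, the resulting play-type process is identical.
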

\begin{proof}
Fix $t\in [t_0,t_0+T]$  and $x\in C(t)$, such that LPAF holds at $t$. Since the $c_i(\cdot)$ are continuous, there exist $\bar{\delta}_1,\bar{\delta}_2>0$ such that
for all $(\tau,y)\in B:= [t-\bar{\delta}_1,t+\bar{\delta}_1]\times \BB(x,\bar{\delta}_2)$ one has 
$J(\tau,y)\subseteq J(t,x)$; moreover, we choose $\bar{\delta}_1$ sufficiently small to assure $[t-\bar{\delta}_1,t+\bar{\delta}_1]\subset \UU_t$, where $\UU_t$ is the neighbourhood in the definition of LPAF. For a set of
indices $\Sigma\subset \{ 1,\ldots ,m\}$, define
\begin{equation*}
\tilde{C}(\Sigma,t) := \bigcap_{i\in\Sigma}\big\{ z\in\mathbb{R}^n : \langle b_i,z\rangle \le c_i(t) \big\}
\end{equation*}
and observe that for each solution $z:[t-\bar{\delta}_1,t+\bar{\delta}_1]\to\mathbb{R}^n$ of 
\eqref{eq:sweepingproc} such that
$|z(\tau)-x|<\bar{\delta}_2$ for all $\tau\in [t-\bar{\delta}_1,t+\bar{\delta}_1]$ one has
\begin{equation*}
\dot{z}(\tau)\in - \NN_{\tilde{C}(J(t,x),\tau)}(z(\tau))\;\text{ a.e.~on }[t-\bar{\delta}_1,t+\bar{\delta}_1],
\end{equation*}
because $C(\tau)\cap \BB(x,\bar{\delta}_2)=\tilde{C}(J(t,x),\tau)\cap \BB(x,\bar{\delta}_2)$, 
provided $|\tau-t| \le  \bar{\delta}_1$, 
so that $\NN_{C(\tau)}(z(\tau))=\NN_{\tilde{C}(J(t,x),\tau)}(z(\tau))$ for all
$\tau\in [t-\bar{\delta}_1,t+\bar{\delta}_1]$. Without loss of generality assume that $J(t,x)=\{ 1,\ldots , n(t,x)\}$ for a suitable $n(t,x)\geq 0$ and consider, for each $\tau\in (t-\bar\delta_1,t+\bar\delta_1)$, the system
\begin{equation*}
	\begin{cases}
		\langle b_1,x+v \rangle &= c_1(\tau)\\
		&\vdots\\
		\langle b_{n(t,x)}, x+v\rangle & = c_{n(t,x)}(\tau)
	\end{cases}
\end{equation*}
in the unknown $v$.	Even when the vectors $b_1,\dots, b_{n(t,x)}$ are not linearly independent, since LPAF holds at $t$ we know that the system admits at least one solution on $(t-\bar\delta_1,t+\bar\delta_1)$. Moreover, if we add the constraint  $v\in \Span \{b_1,\dots, b_{n(t,x)}\}$ the system has a \emph{unique} solution, which we call $v(\tau)$. We notice that $v(\tau)$ is a  Lipschitz-continuous function of $\tau$ and satisfies $v(t)=0$.

We claim now that 
$\tilde{C}(J(t,x),t)+v(\tau)=\tilde{C}(J(t,x),\tau)$. Indeed,
let $y=z+v(\tau)$, with $z\in\tilde{C}(J(t,x),t)$. Then, for $i=1,\ldots,n(t,x)$,
\begin{equation*}
\begin{split}
\langle b_i,y\rangle = \langle b_i,z+v(\tau)\rangle &\leq c_i(t)+\langle b_i,v(\tau)\rangle\\
&=c_i(t)+\langle b_i,x+v(\tau)\rangle-\langle b_i,x\rangle\\
&=c_i(t)+c_i(\tau)-c_i(t)=c_i(\tau).
\end{split}
\end{equation*}
Conversely, let $y\in\tilde{C}(J(t,x),\tau)$. Then, for $i=1,\ldots ,n(t,x)$,
\begin{equation*}
\begin{split}
\langle b_i, y-v(\tau)\rangle &= \langle b_i,y-(x+v(\tau))\rangle + \langle b_i,x\rangle\\
&\le c_i(\tau) - c_i(\tau) + c_i(t)=c_i(t),
\end{split}
\end{equation*}
that confirms the claim. 

Let now $\delta_2>0$
and $0<\delta_1<\bar{\delta}_1$ be sufficiently small so that all solutions $z$ of
$\dot{z}\in - \NN_{C(t)}(z)$ with $z(\tau)=y\in B(x,\delta_2)$ satisfy $|z(s)-x|<\frac{\bar{\delta}_2}{2}$ 
for all $s\in (t-\delta_1,t+\delta_1)$. Setting $\mathfrak{C}:=\tilde{C}(J(t,x),t)$ we conclude the proof.
\end{proof}

In order to prove Theorem \ref{th:attractor}, let us first recall some results from \cite{GudMak}.  Let $\mathcal Z$ be the set of $T$-periodic solutions of 
\eqref{eq:sweepingproc} and $\mathcal Z(t):=\{ z(t):z\in \mathcal Z\}$. 

\begin{theorem}[cf.~Theorem 4.3 in \cite{GudMak}] \label{th:gudmak} Let $C(t)$ satisfy \ref{hyp:C1}. Then the set $\mathcal{Z}(t)$  is closed and convex for every $t\in [t_0,t_0+T]$. Moreover, given two $T$-periodic solutions $\bar z,\hat z \in \mathcal{Z}$  of  \eqref{eq:sweepingproc}, 
then
\begin{equation}  \label{eq:eqder2}
	\dot{\bar z}(t)=\dot{\hat z}(t) \text{ for almost every $t\in[t_0,t_0+T]$}.
\end{equation}	
Finally $\mathcal Z(t)$ is a global attractor for the system, meaning that any solution $z\colon[t_0,+\infty)\to\mathbb{R}^n$ of \eqref{eq:sweepingproc} satisfies
\begin{equation} \label{eq:convtoZ}
	\lim_{t\to+\infty} \dist(z(t),\mathcal{Z}(t))=0
\end{equation}
\end{theorem}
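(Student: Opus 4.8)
The plan is to deduce all three assertions from two structural facts about \eqref{eq:sweepingproc}. The first is the classical \emph{nonexpansiveness}: if $z_1,z_2$ solve \eqref{eq:sweepingproc} on a common interval, monotonicity of $v\mapsto\NN_{C(t)}(v)$ gives $\tfrac{d}{dt}|z_1-z_2|^2=2\langle\dot z_1-\dot z_2,z_1-z_2\rangle\le 0$, so $t\mapsto|z_1(t)-z_2(t)|$ is non-increasing; testing $-\dot z_1\in\NN_{C(t)}(z_1)$ against $z_2\in C(t)$ and symmetrically also gives $\langle\dot z_1,z_2-z_1\rangle\ge 0$ and $\langle\dot z_2,z_1-z_2\rangle\ge 0$ a.e. Consequently the period map $P\colon C(t_0)\to C(t_0)$, $P(z_0):=z(t_0+T;z_0)$ (well defined by forward well-posedness, see \cite{CT}), is $1$-Lipschitz, the whole solution depends $1$-Lipschitzly on its datum in the sup norm on $[t_0,t_0+T]$, and $\mathrm{Fix}(P)=\mathcal Z(t_0)$. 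The second fact is a \emph{polyhedral rigidity lemma}: under \ref{hyp:C1}, if $z_1,z_2$ solve \eqref{eq:sweepingproc} and $\langle\dot z_1(t)-\dot z_2(t),z_1(t)-z_2(t)\rangle=0$ for a.e.\ $t$, then $\dot z_1=\dot z_2$ a.e. To prove it, fix $t$ outside a null set; the hypothesis and the two inequalities above force $\langle\dot z_1,z_2-z_1\rangle=\langle\dot z_2,z_1-z_2\rangle=0$. Write $-\dot z_1(t)=\sum_{i\in J(t,z_1(t))}\lambda_i b_i$ with $\lambda_i\ge 0$ (the normal cone of a polyhedron is generated by the active $b_i$). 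Then $0=\langle\dot z_1,z_2-z_1\rangle=-\sum_i\lambda_i(\langle b_i,z_2(t)\rangle-c_i(t))$ with every summand $\le 0$ since $z_2(t)\in C(t)$, so $\lambda_i>0\Rightarrow i\in J(t,z_2(t))$. Differentiating the identity $\langle b_i,z_j(\cdot)\rangle=c_i(\cdot)$, valid on $\{i\in J(\cdot,z_j(\cdot))\}$, gives $\langle b_i,\dot z_j\rangle=\dot c_i$ a.e.\ on that set; hence $\lambda_i>0$ implies $\langle b_i,\dot z_1-\dot z_2\rangle=0$, so $\langle\dot z_1,\dot z_1-\dot z_2\rangle=-\sum_i\lambda_i\langle b_i,\dot z_1-\dot z_2\rangle=0$ and, symmetrically, $\langle\dot z_2,\dot z_1-\dot z_2\rangle=0$; subtracting gives $|\dot z_1-\dot z_2|^2=0$.

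Given these, the first two claims are quick. If $\bar z,\hat z\in\mathcal Z$, then $|\bar z(\cdot)-\hat z(\cdot)|$ is $T$-periodic and non-increasing, hence constant, so $\langle\dot{\bar z}-\dot{\hat z},\bar z-\hat z\rangle=0$ a.e., and the rigidity lemma gives \eqref{eq:eqder2}. The same constancy, together with $\langle\dot{\bar z},\bar z-\hat z\rangle\le 0\le\langle\dot{\hat z},\bar z-\hat z\rangle$, forces $\langle\dot{\bar z},\bar z-\hat z\rangle=\langle\dot{\hat z},\bar z-\hat z\rangle=0$ a.e.; hence for $\theta\in[0,1]$ and $y:=\theta\bar z+(1-\theta)\hat z$ one has $y(t)\in C(t)$ and, for every $x\in C(t)$, a direct expansion gives $\langle-\dot y,x-y\rangle=\theta\langle-\dot{\bar z},x-\bar z\rangle+(1-\theta)\langle-\dot{\hat z},x-\hat z\rangle\le 0$ (the cross terms vanish), so $y$ is again a $T$-periodic solution. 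Thus $\mathcal Z$ is convex and so is each $\mathcal Z(t)$. For closedness: if $w_k=z_k(t)\to w$ with $z_k\in\mathcal Z$, then the solution $z$ through $w$ at time $t$ is the uniform limit on $[t,t+T]$ of the $z_k$ (Lipschitz dependence), so $z(t+T)=\lim_k z_k(t+T)=\lim_k z_k(t)=w=z(t)$, whence $z$ extends $T$-periodically and $w\in\mathcal Z(t)$.

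For the global attractor, $\mathcal Z(t_0)=\mathrm{Fix}(P)$ is nonempty — by Brouwer's theorem if $C(t_0)$ is compact, as under the standing hypotheses of the paper, and in general as in \cite{GudMak}. Fix $\bar z\in\mathcal Z$, $\bar w:=\bar z(t_0)$, let $z$ solve \eqref{eq:sweepingproc} on $[t_0,+\infty)$, and set $w_n:=z(t_0+nT)=P^n(z(t_0))$. By $T$-periodicity of $C$, $z(\cdot+nT)$ is the solution with value $w_n$ at $t_0$, and integrating $\tfrac{d}{dt}|z(\cdot+nT)-\bar z|^2$ over one period gives $|w_{n+1}-\bar w|^2-|w_n-\bar w|^2=-D_n$, where $D_n:=-2\int_{t_0}^{t_0+T}\langle\dot z(\tau+nT)-\dot{\bar z},z(\tau+nT)-\bar z\rangle\,d\tau\ge 0$; hence $|w_n-\bar w|$ is non-increasing and $\sum_nD_n<\infty$, so $D_n\to 0$. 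Let $w^*$ be a cluster point of $(w_n)$, with $w_{n_k}\to w^*$: by Lipschitz dependence, $z(\cdot+n_kT)\to z^*$ uniformly on $[t_0,t_0+T]$, $z^*$ being the solution with $z^*(t_0)=w^*$, while $\dot z(\cdot+n_kT)$ — bounded in $L^\infty$ — converges weakly in $L^2$ to $\dot z^*$; passing to the limit in $-\tfrac12 D_{n_k}$ (weak--strong pairing) gives $\int_{t_0}^{t_0+T}\langle\dot z^*-\dot{\bar z},z^*-\bar z\rangle\,d\tau=0$, so the (a.e.\ nonpositive) integrand vanishes a.e., and the rigidity lemma yields $\dot z^*=\dot{\bar z}$ a.e.\ on $[t_0,t_0+T]$. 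Thus $z^*=\bar z+(w^*-\bar w)$ there, in particular $z^*(t_0+T)=\bar z(t_0+T)+(w^*-\bar w)=w^*$, i.e.\ $w^*\in\mathrm{Fix}(P)$. So every cluster point of $(w_n)$ lies in $\mathcal Z(t_0)$; fixing one, $w^*$, the non-increasing sequence $|w_n-w^*|$ has a subsequence tending to $0$, hence $w_n\to w^*$. Finally, taking the $T$-periodic $\bar z\in\mathcal Z$ with $\bar z(t_0)=w^*$, the function $|z(t)-\bar z(t)|$ is non-increasing on $[t_0,+\infty)$ and equals $|w_n-w^*|\to 0$ along $t=t_0+nT$, so $|z(t)-\bar z(t)|\to 0$ and hence $\dist(z(t),\mathcal Z(t))\to 0$.

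The main obstacle is the attractor statement, and within it the asymptotic regularity of the iterates $(w_n)$: nonexpansiveness of $P$ alone does not force $|w_{n+1}-w_n|\to 0$ (e.g.\ a rotation), so the dissipation must be exploited, and it is precisely the polyhedral structure, through the rigidity lemma, that turns the vanishing of $D_n$ in the limit into the statement that $z^*$ differs from $\bar z$ by a constant and is therefore $T$-periodic. The one genuinely technical ingredient behind this is the standard continuous dependence for \eqref{eq:sweepingproc}, in the $C^0$ topology for the solution and the weak $L^2$ topology for its derivative.
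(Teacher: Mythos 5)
Your proposal is correct, but it does not follow the paper's route, because the paper has no proof of this statement at all: Theorem \ref{th:gudmak} is imported verbatim from \cite{GudMak} (Theorem 4.3), with only the remark that the framework there is slightly more general than \ref{hyp:C1}. What you give is a self-contained proof, and its two pillars are sound: the nonexpansiveness estimate $\frac{\dd}{\dd t}|z_1-z_2|^2\le 0$ together with the pointwise inequalities $\langle\dot z_1,z_2-z_1\rangle\ge 0$, and the polyhedral rigidity lemma, where the representation $-\dot z_1=\sum_{i\in J(t,z_1)}\lambda_i b_i$ (no LICQ needed, only existence of the cone representation), the complementarity argument forcing $\lambda_i>0\Rightarrow i\in J(t,z_2(t))$, and the a.e.\ differentiation of $\langle b_i,z_j(\cdot)\rangle=c_i(\cdot)$ on the active set yield $\dot z_1=\dot z_2$ a.e. These are precisely the mechanisms the paper itself uses elsewhere (the monotonicity \eqref{eq:monotony} in the proof of Theorem \ref{th:attractor}, the normal-cone representation in Lemma \ref{lem:indep_base}), and they are close in spirit to the arguments of \cite{GudMak}, so the proof is at the right scale. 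Your attractor argument is a nice alternative to the citation: summability of the dissipation $D_n$, a cluster point $w^*$, weak--strong passage to the limit in the pairing, rigidity to conclude $z^*-\bar z$ is constant hence $w^*\in\mathrm{Fix}(P)=\mathcal Z(t_0)$, and then monotonicity of $|w_n-w^*|$ to upgrade subsequential to full convergence; the convexity computation with vanishing cross terms and the closedness argument via $1$-Lipschitz dependence are also correct. Two small caveats: (i) nonemptiness of $\mathcal Z$, which the attractor claim implicitly requires, is fully argued only when $C(t_0)$ is compact (Brouwer applied to the nonexpansive period map); under \ref{hyp:C1} alone you still defer to \cite{GudMak}, so at that single point your proof is not self-contained (harmless here, since \ref{hyp:C2} holds wherever the theorem is used); (ii) the weak $L^2$ compactness of $\dot z(\cdot+n_kT)$ rests on the uniform Lipschitz bound for solutions, which in turn uses the Hausdorff--Lipschitz continuity of $t\mapsto C(t)$ recorded after \ref{hyp:C1} (Hoffman-type bound); this deserves one explicit line, as does the routine check that the exceptional null sets in the differentiation step can be taken uniformly over the finitely many indices $i$.
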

Let us remark that here we stated Theorem \ref{th:gudmak} under the assumption \ref{hyp:C1}, 
but it is actually proved in \cite{GudMak} considering a slightly more general framework. 
We also observe that given $\bar z\in\mathcal{Z}$, every other $T$-periodic solution $\tilde z\in \mathcal Z$ 
can be characterized as
\begin{equation}\label{eq:per_char}
\tilde z(t)=\bar z(t)-\bar z(t_0)+\tilde z(t_0)
\end{equation}
for any time $t_0\in \R$.

\begin{proof}[Proof of Theorem \ref{th:attractor}]
Let $z\colon [t_0,+\infty)\to\mathbb{R}^n$ be a solution of \eqref{eq:sweepingproc}. Our first step is to prove that there exists $\bar z\in \mathcal{Z}$ such that
\begin{equation*}
	\lim_{q\to\infty} \| z(\cdot + q T)-\bar{z}(\cdot)\|_{L^\infty([t_0,t_0+T];\mathbb{R}^n)}=0.
\end{equation*}
Let us start by noticing that,  by  \eqref{eq:convtoZ} in Theorem \ref{th:gudmak} we deduce that  for each $q\in\N$ there exists a periodic solution $z_q\in \mathcal Z$ such that
\begin{equation*}
\abs{z(t_0+qT)-z_q(t_0+qT)}=d(z(t_0+qT),\mathcal{Z}(t_0+qT))\to 0 
\end{equation*}
as $q\to\infty$. By compactness, due to \ref{hyp:C2}, there exist a subsequence of positive integers $\{ q_k\}$, and $\bar{z}\in X$ such that $z_{q_k}(t_0)\to \bar{z}(t_0)$. By \eqref{eq:per_char}, this implies that $z_{q_k}\to \bar{z}$
uniformly in $[t_0,t_0+T]$. Hence,
\begin{equation}\label{eis}
|z(t_0+q_kT)-\bar{z}(t_0)|=|z(t_0+q_kT)-\bar{z}(t_0+q_kT)|\to 0\;\text{ as }q_k\to\infty.
\end{equation}
We claim now that the whole sequence $z(\cdot + qT)$ converges to $\bar{z}(\cdot)=\bar z(\cdot+qT)$ uniformly in $[t_0,t_0+T]$ as $q\to+\infty$. Indeed, by convexity, we have
\begin{equation}\label{eq:monotony}
\frac{\dd}{\dd t} \frac12 | z(t)-\bar{z}(t)|^2 = \langle \dot{z} (t)-\dot{\bar{z}}(t),z(t)-\bar{z}(t)\rangle \le 0\quad\text{a.e. $t\in\R$}.
\end{equation}
This implies that, for every $t_1,t_2$ with $t_2>t_1\geq t_0$ we have
\begin{equation}
\abs{z(t_2)-\bar{z}(t_2)}\leq \abs{z(t_1)-\bar{z}(t_1)}.
\end{equation} 
We deduce immediately that the sequence $\abs{z(t_0+qT)-\bar{z}(t_0)}$ is monotone decreasing, hence from \eqref{eis} we obtain $|z(t_0+qT)-\bar{z}(t_0)|\to 0$ as $q\to +\infty$ for the whole sequence. It follows that
\begin{equation}
\norm{z(\cdot + q T)-\bar{z}(\cdot)}_{L^{\infty}([t_0,t_0+T];\mathbb{R}^n)}\leq \abs{z(t_0+qT)-\bar{z}(t_0)} \to 0.
\end{equation}
We wish to prove now that the convergence of $z(\cdot + qT)$ to $\bar{z}(\cdot)$ is indeed strong in $W^{1,2}([t_0,t_0+T];\mathbb{R}^n)$.
Applying Lemma \ref{lem:red_play} and using the compactness of $\graph(C,[t_0,t_0+T])$, we find finitely many points $(t_\ell,x_\ell)\in [t_0,t_0+T]\times
\mathbb{R}^n$, $\ell =1,\ldots ,\bar \ell$, with $x_\ell\in C(t_\ell)$, and positive numbers $\delta_1^\ell,\delta_2^\ell$ together with a Lipschitz function
$v_\ell: (t_\ell -\delta_1^\ell,t+\delta_1^\ell)\to\mathbb{R}^n$ and a polyhedron $\mathfrak C_\ell$ such that
\begin{equation*}
\bigcup_{\ell =1}^{\bar \ell} \big(t_\ell+\delta_1^\ell,t_\ell+\delta_1^\ell\big)\times \BB\Big(x_\ell,\frac{\delta_2^\ell}{2}\Big)\supset\graph(C,[t_0,t_0+T])
\end{equation*}
and for each $\ell =1,\ldots ,\bar \ell$ and each $(\tau,y)\in \big(t_\ell+\delta_1^\ell,t_\ell+\delta_1^\ell\big)\times \BB\big(x_\ell,\delta_2^\ell)$
the solution of 
\begin{equation*}
\begin{cases}
\dot{z}(\cdot)&\in - \NN_{C(\cdot)}(z(\cdot))\\
z(\tau)&=y
\end{cases}
\end{equation*}
is the solution of
\begin{equation*}
\begin{cases}
\dot{z}(\cdot)&\in - \NN_{\mathfrak{C}_\ell + v_\ell (\cdot)}(z(\cdot))\\
z(\tau)&=y
\end{cases}
\end{equation*}
in $(t_\ell-\delta_1^\ell,t_\ell+\delta_1^\ell)$. Hence the graph of $\bar{z}$ is contained in the union of the sets $(t_\ell-\delta_1^\ell, t_\ell+\delta_1^\ell)
\times \BB\big(x_\ell,\frac{\delta_2^\ell}{2}\big)$ with respect to a subcollection $\ell\in I_{\bar{z}}$. By uniform convergence, for all $q$ large enough also the graph 
of $z(\cdot + qT)$ is contained in the union of the same elements of the covering. Therefore, owing to Lemma  \ref{lem:red_play}, a.e.~in each interval 
$(t_\ell-\delta_1^\ell,t_\ell+\delta_1^\ell)$, $\ell\in I_{\bar{z}}$, we have both $\dot{z}(s+qT)\in - \NN_{\mathfrak C_\ell+v_\ell(s)}(z(s+qT))$ and
$\dot{\bar{z}}(s)\in - \NN_{\mathfrak C_\ell+v_\ell(s)}(\bar{z}(s))$. Fix now the interval $I_\ell:=(t_\ell-\delta_1^\ell,t_\ell+\delta_1^\ell)$, with
$\ell\in I_{\bar{z}}$. We know from the previous arguments that $z(\cdot + qT)$ converges to $\bar{z}(\cdot)$ uniformly in $I_\ell$.
Set now $w_q(\cdot):=z(\cdot+qT)$. Since $\bar{z}$ is $T$-periodic, we have that $w_q\to\bar{z}$ uniformly in $I_\ell$ and 
$\dot{\bar{z}}(s)\in -\NN_{\mathfrak{C}_\ell+v_\ell(s)}(\bar{z}(s))$ a.e.~in $I_\ell$. For each $q$ large enough and each $s\in I_\ell$, we have that
$w_q(s)-v_\ell(s)\in \mathfrak{C}_\ell$. Fix $s\in I_\ell$ such that $\dot{w}_q(s)$ and $\dot{v}_\ell(s)$ exist. Then
\begin{equation*}
\Big\langle\dot{w}_q(s),\frac{w_q(s+h)-v_\ell(s+h)-(w_q(s)+v_\ell(s))}{h}\Big\rangle \ge 0 \; \text{for all }h>0,
\end{equation*}
while
\begin{equation*}
\Big\langle\dot{w}_q(s),\frac{w_q(s+h)-v_\ell(s+h)-(w_q(s)+v_\ell(s))}{h}\Big\rangle \le 0 \; \text{for all }h<0.
\end{equation*}
Hence, by passing to the limit as $h\to 0$ we obtain that
\begin{equation}\label{dyo}
\langle \dot{w}_q(s),\dot{w}_q(s)-\dot{v}_\ell(s)\rangle =0.
\end{equation}
The same argument shows that we have as well
\begin{equation}   \label{treis}
\langle\dot{\bar{z}}(s),\dot{\bar{z}}(s)-\dot{v}_\ell(s)\rangle =0\;\text{ a.e.~in } I_\ell.
\end{equation}
By \eqref{dyo}, we deduce that $\norm{\dot{w}_q}_{L^2(I_\ell)}\leq \norm{\dot v_l}_{L^2(I_\ell)}$; hence, up to a subsequence, $\dot{w}_q$ convergence weakly in  $L^2((t_0,t_0+qT);\mathbb{R}^n)$. By a standard argument, using the fact that $w_q\to \bar z$ uniformly, we deduce that actually the whole sequence  $\dot{w}_q$ converges to $\dot{\bar{z}}$ weakly in $L^2((t_0,t_0+qT);\mathbb{R}^n)$,
hence in $L^2(I_\ell;\mathbb{R}^n)$. 

To complete the proof, namely to show that $\dot{w}_q$ converges to $\dot{\bar{z}}$ strongly in  $L^2(I_\ell;\mathbb{R}^n)$, it is therefore enough to show that $\|\dot{w}_q\|_{L^2(I_\ell)}^2\to\|\dot{\bar{z}}\|_{L^2(I_\ell)}^2$. But \eqref{dyo}
implies, thanks to the weak convergence, that
\begin{equation*}
\|\dot{w}_q\|_{L^2(I_\ell)}^2 =\int_{I_\ell} \langle\dot{w}_q(s),\dot{v}_\ell(s)\rangle\, \dd s \rightarrow
\int_{I_\ell} \langle\dot{\bar{z}}(s), \dot{v}_\ell(s)\rangle\, \dd s,
\end{equation*}
and the latter is equal to $\|\dot{\bar{z}}\|_{L^2(I_\ell)}^2$ by \eqref{treis}. Since the intervals $I_\ell$, $\ell\in I_{\bar{z}}$ cover the whole of
$[t_0,t_0+T]$, the proof of \eqref{eq:pente} is concluded. 
\end{proof}
\begin{corollary} \label{cor:meas_decomp}
	Under the assumptions of Theorem \ref{th:attractor}, with \ref{hyp:C4} replaced by \ref{hyp:C3}, 
let $\lambda_i(\cdot)$, resp. $\bar{\lambda}_i(\cdot)$ be the coefficients appearing in
	\eqref{eq:zdot} for $\dot{z}$, resp. for $\dot{\bar{z}}$. Then
	\begin{equation*}  
	\lim_{q\to\infty} \| \lambda_i (\cdot+qT)-\bar{\lambda}_i(\cdot)\|_{L^2((t_0,t_0+T);\mathbb{R}^n)}=0.
	\end{equation*}
\end{corollary}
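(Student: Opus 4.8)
The plan is to carry the strong $L^2$-convergence of the derivatives $\dot z(\cdot+qT)\to\dot{\bar z}$ granted by Theorem~\ref{th:attractor} over to the coefficients $\lambda_i$, exploiting that these depend on the derivative in a \emph{uniformly} Lipschitz fashion.

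First I would record the structural facts. Under \ref{hyp:C1} and \ref{hyp:C3}, Lemma~\ref{lem:meas_sel} applies to $z$ and, equally, to the $T$-periodic solution $\bar z$ (restricted to $[t_0,+\infty)$); thus for a.e.\ $t$ one has $\lambda_i(t)=\tilde\lambda_i(t,-\dot z(t))$ and $\bar\lambda_i(t)=\tilde\lambda_i(t,-\dot{\bar z}(t))$, where $v\mapsto\tilde\lambda_i(t,v)$ is the map furnished by Lemma~\ref{lem:indep_base} at every $t$ at which LICQ holds. Moreover, by Lemma~\ref{lem:indep_base}, this map depends on $t$ only through the finite datum $\mathcal{F}(t)$.

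The key step — and the main obstacle — is to extract from Lemma~\ref{lem:indep_base} a Lipschitz constant for $v\mapsto\tilde\lambda_i(t,v)$ that is independent of $t$; only then does a pointwise estimate integrate to an $L^2$ bound. For this I would reason as in the last paragraph of the proof of Lemma~\ref{lem:indep_base}: $\tilde\lambda_i(t,\cdot)$ is continuous and piecewise linear on $\mathbb{R}^n$, with one linear component on the normal cone of each face of $C(t)$; on the normal cone of the face with active set $\Sigma$ that component sends $v$ to the unique $\lambda$ with $v=\sum_{i\in\Sigma}\lambda_i b_i$, a linear map whose operator norm is governed solely by the fixed family $\{b_i:i\in\Sigma\}$ (it is the reciprocal of the smallest singular value of the matrix with columns $b_i$, $i\in\Sigma$, which is positive by LICQ). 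As $\Sigma$ ranges over the finitely many linearly independent subsets of $\{1,\dots,m\}$, the supremum $L$ of these operator norms is a finite number depending only on $b_1,\dots,b_m$; a continuous piecewise-linear map on $\mathbb{R}^n$ all of whose pieces are $L$-Lipschitz is globally $L$-Lipschitz, so $\tilde\lambda_i(t,\cdot)$ is $L$-Lipschitz for a.e.\ $t$.

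It then remains to combine these ingredients with periodicity. By \ref{hyp:C1} the maps $t\mapsto C(t)$, $t\mapsto\mathcal{F}(t)$, and the null set on which LICQ fails are all $T$-periodic, so $\tilde\lambda_i(t+qT,\cdot)=\tilde\lambda_i(t,\cdot)$ for every $q\in\N$ and a.e.\ $t$; hence $\lambda_i(t+qT)=\tilde\lambda_i(t,-\dot z(t+qT))$. Together with the uniform Lipschitz bound this yields, for a.e.\ $t\in[t_0,t_0+T]$,
\[
|\lambda_i(t+qT)-\bar\lambda_i(t)|=\bigl|\tilde\lambda_i(t,-\dot z(t+qT))-\tilde\lambda_i(t,-\dot{\bar z}(t))\bigr|\le L\,|\dot z(t+qT)-\dot{\bar z}(t)|,
\]
and squaring, integrating over $[t_0,t_0+T]$, and invoking \eqref{eq:pente} from Theorem~\ref{th:attractor} gives $\|\lambda_i(\cdot+qT)-\bar\lambda_i(\cdot)\|_{L^2((t_0,t_0+T))}\le L\,\|\dot z(\cdot+qT)-\dot{\bar z}(\cdot)\|_{L^2((t_0,t_0+T))}\to0$. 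The remaining points (measurability, and that replacing the "a.e."\ statements valid for each $q$ by one valid for all $q$ only discards a countable union of translates of a null set) are routine.
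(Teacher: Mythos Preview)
Your proof is correct and rests on the same core observation the paper uses: the map $v\mapsto\lambda_i$ from Lemma~\ref{lem:indep_base} is Lipschitz with a constant that can be bounded uniformly in $t$. Your justification for this uniformity---that each linear piece depends only on the subset $\Sigma\subseteq\{1,\dots,m\}$ of active indices, and there are finitely many linearly independent such subsets---is exactly right and in fact spells out more carefully what the paper's ``$\gamma(\cdot)$ can be supposed to be bounded'' is gesturing at via the reverse triangle inequality~\eqref{eq:revtrineq}.

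The execution, however, differs. The paper argues in two stages: first it uses the uniform bound to obtain strong $L^2$-convergence of $\lambda_i(\cdot+qT)$ to \emph{some} limit $\tilde\lambda_i$, and then separately identifies $\tilde\lambda_i=\bar\lambda_i$ by showing that $\tilde\lambda_i(t)=0$ whenever $i\notin J(t,\bar z(t))$, using that for $q$ large the active set of $z(\cdot+qT)$ is contained in that of $\bar z$. Your route is more direct: by invoking periodicity of $\mathcal{F}(\cdot)$ you feed both $-\dot z(t+qT)$ and $-\dot{\bar z}(t)$ into the \emph{same} map $\tilde\lambda_i(t,\cdot)$, obtaining the pointwise bound $|\lambda_i(t+qT)-\bar\lambda_i(t)|\le L|\dot z(t+qT)-\dot{\bar z}(t)|$ and concluding in one stroke. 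This bypasses the identification step entirely, which is a genuine simplification; the paper's route, on the other hand, records the additional qualitative fact about the eventual inclusion of active sets, which is not needed here but could be of independent interest.
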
 
\begin{proof}
Observe that, by \ref{hyp:C3} and \eqref{eq:revtrineq}, for a.e.~$t\in [t_0,+\infty)$there exists 
$\gamma (t)>0$ such that
\begin{equation*}
\sum_{i=1}^m \lambda_i(t) |b_i| \le \gamma (t)
\left|  \sum_{i=1}^m \lambda_i(t) b_i  \right|.
\end{equation*}	
Since \ref{hyp:C3} implies \ref{hyp:C4}, the measurable function $\gamma (\cdot)$ can be supposed to be bounded,
which gives in turn the strong convergence in $L^2(t_0,t_0+T)$ of each sequence $\lambda_i(\cdot+qT)$ to a 
function $\tilde \lambda_i$. In order to show that $\tilde \lambda_i=\bar \lambda_i$ (as $L^2$ functions), 
by Lemma \ref{lem:meas_sel} it is sufficient to show that $\tilde \lambda_i(t)=0$ for almost every $t$ such that 
$i\notin J(t,\bar z)$.
This can be deduced by noticing that, as a consequence of Theorem \ref{th:attractor},  
for every $\hat t\in[t_0,t_0+T]$, there exist $\hat q\in \N$,  such that 
$J(\hat t,z(\hat t+qT))\subseteq J(\hat t, \bar z(\hat t))$ for every $q\geq \hat q$. 
Hence, for $q\to+\infty$, we have $\lambda_i(\cdot+qT)\to 0$  pointwise almost everywhere on 
$\{t:i \notin J(t,\bar z)\}$, so that the desired fact follows by the Dominated Convergence Theorem.

\end{proof}

\section{Asymptotic average velocity for soft crawlers} \label{sec:model}
Following \cite{Gid18}, we consider a general model of crawling locomotor and show how it can be formulated as a 
sweeping process, in order to apply the results of the previous section.

\subsubsection*{A model of crawler}

Our model of crawler, illustrated in Figure \ref{fig:crawler}, consists of a chain of $N\geq 2$ material points, so that its body 
is described by the set $\Omega_N=\{\xi_1,\xi_2,\dots,\xi_N\}\subset\R$. The displacement of the crawler is described by 
a vector $x=(x_1,x_2,\dots,x_N)$ in $X=\R^{N}$, identifying with $x_i$ the displacement of the point $\xi_i$.
We assume that the body of the crawler has $N-1$  links joining each couple of consecutive blocks. 
Each link is composed by an actuator, namely a segment with time dependent length $L_i(t)$, and an elastic spring, in series with the actuator.

As discussed in detail in \cite{GidRiv}, when the rate of the actuation is very slow, as is usually the case of  such locomotors, the evolution of the system is given by the force balance between the elastic forces 
produced by the deformation of the springs in the links and the frictional forces in the points of interaction with the surface, whereas inertial forces can be neglected, as well as possible viscous resistances in the system.
Such force balance can be expressed variationally in the following form
\begin{equation} \label{eq:forcebalance}
	0\in D_x\EE(t,x)+\partial_{\dot x} \RR(t,\dot x)
\end{equation}
Here  the internal energy $\EE(t,x)$ of the crawler is the sum of the $N-1$ terms $\EE_i(t,x)$ associated with each link, namely
\begin{equation}\label{eq:linkenergy}
\EE(t,x)=\sum_{i=1}^{N-1}\EE_i(t,x)=\sum_{i=1}^{N-1}\frac{k}{2}(x_{i+1}-x_i-L_i(t))^2
\end{equation}
where $k>0$ is the elastic constant of the springs, which we assume to obey Hooke's law. We assume that the actuations $L_i(t)$ are Lipschitz continuous.

Friction forces are represented by the dissipation potential
\begin{equation} \label{xeq:dissipoint}
\RR(t,\dot x)=\sum_{i=1}^{N} \RR_i(t,\dot x_i) \qquad \text{with}\quad  \RR_i(t,\dot x_i)=\begin{cases}
-\mu_i^-(t)\dot x_i  &  \text{if $\dot x_i\leq 0$} \\
\mu_i^+(t)\dot x_i &  \text{if $\dot x_i\geq 0$}
\end{cases}
\end{equation}
where the functions $\mu_i^\pm(t)$ are assumed to be Lipschitz continuous and such that there exists two 
positive constants $\alpha_1,\alpha_2$ for which $\alpha_1\leq \mu_i^\pm(t)\leq \alpha_2$
 for every index $i$ and direction $\pm$.

We notice that, since we are working in a quasistatic setting, not every initial configuration is admissible, 
since too large tensions in the elastic body cannot be counterbalanced by the  friction forces, which are bounded by construction.
Let us denote with $e_1,\dots,e_N$ the canonical base of $X$. We define the set $C(t)$ as
\begin{equation}\label{eq:defC}
C(t):=\{x\in X : -\mu_i^-(t)\leq\scal{e_i}{x}\leq \mu_i^+(t)\;\, \text{for $i=1,\dots,N$}\}=\partial_{\dot x}\RR(t,0)
\end{equation}
We notice that since $\RR$ is positively homogeneous of degree one, we have 
$\partial_{\dot x}\RR(t,\dot{x})\subseteq \partial_{\dot x}\RR(t,0)$ for every $\dot x\in X$. We say that an initial state $x(t_0)=x_0$ 
is \emph{admissible} for problem \eqref{eq:forcebalance} if it satisfies
\begin{equation}
	-D_x\EE(t,x_0)\in C(t_0)
\end{equation}
As discussed in \cite[Section 2.2]{Gid18}, whereas existence of solution for the admissible initial value problems 
associated to \eqref{eq:forcebalance} is guaranteed, the same is not true in general for uniqueness.   
The situations in which we may observe multiplicity of solution are however confined to some special 
choices of the friction coefficients, presenting critical symmetries such that the inertial effects, 
usually negligible, become decisive for the evolution of the real system. 
To guarantee uniqueness of solution, we make therefore the following assumption: for every subset of 
indices $J\subseteq\{1,\dots,N\}$ we have
\begin{equation}\label{eq:uniqueness}
\sum_{i\in J} \mu^+_i(t)-\sum_{i\in J^c} \mu^-_i(t)\neq 0 \quad \text{for almost every $t$}
\end{equation}
where $J^c$ is the complement of $J$ \cite[Sec.~5.1]{Gid18}.

\subsubsection*{Asymptotic average velocity}

We notice that the information provided by the state $x\in X$ of the crawler can be divided into two components: 
a scalar $y\in Y\cong \R$ describing the \lq\lq average\rq\rq\ position of the crawler, and a $(N-1)$-dimensional 
vector $z\in Z\cong \R^{N-1}$ describing the shape of the crawler.
We therefore define the projections
\begin{equation}
\begin{array}{l}
\pi_Y(x):=\frac{1}{N}\sum_{i=1}^N x_i=:y\in Y\\[2mm]
\pi_Z(x):=(x_2-x_1,\dots,x_N-x_{N-1})=:(z_1,\dots, z_{N-1})\in Z
\end{array}
\end{equation}

We are now ready to state our main result of the section. Let us consider the locomotion $x(t)$ of a crawler, 
guided by the dynamics \eqref{eq:forcebalance}. We define a \emph{gait} 
$\GG=(L_1,\dots,L_{N-1},\mu_1^+,\mu_1^-,\dots,\mu_N^+, \mu_N^-)$ of the crawler as a specific choice of 
$T$-periodic functions  $L_i$, namely, the rest lengths of the actuators, and 
$\mu_j^\pm$, namely, the coefficients in the dissipation potential, satisfying all the running assumptions, 
including \eqref{eq:uniqueness}.

\begin{theorem} \label{th:speed}
	For every gait $\GG$ there exists an average asymptotic velocity $v_0(\GG)$ defined as
	\begin{equation}\label{eq:speed}
	v_0(\GG):=\lim_{t\to+\infty}\frac{\pi_Y(x(t))-\pi_Y(x(t_0))}{t-t_0}
	\end{equation}
	where $x(t)\colon[t_0,+\infty)$ is a solution of \eqref{eq:forcebalance}.
	In particular, $v_0(\GG)$ does not depend on the admissible initial state $x(t_0)\in C(t_0)$, 
meaning that \eqref{eq:speed} holds with the same value of $v_0(\GG)$ for every solution $x(t)$ of \eqref{eq:forcebalance} with gait $\GG$.

Moreover there exists a running-periodic solution $\bar x(t)$ of the form \eqref{eq:runningper} with $\bar v_0=v_0(\GG)$ such that
\begin{equation}\label{eq:crawl_conv}
	\lim_{q\to\infty} \| x(\cdot + q T)-\bar{x}(\cdot)\|_{W^{1,2}([t_0,t_0+T];\mathbb{R}^n)}=0
\end{equation}
(notice that however $\bar x_0$ depends on $x(t_0)$).

\end{theorem}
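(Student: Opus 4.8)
The plan is to reduce the force-balance dynamics \eqref{eq:forcebalance} to a polyhedral sweeping process of the form \eqref{eq:sweepingproc} in the shape variable, then invoke Theorem \ref{th:attractor} and Corollary \ref{cor:meas_decomp}, and finally recover the average velocity from the reconstructed barycentre motion. First I would make the change of variables $w := -D_x\EE(t,x) = -k\,\pi_Z$-type transform, observing that by \eqref{eq:linkenergy} the gradient $D_x\EE(t,x)$ depends on $x$ only through the shape $\pi_Z(x)$ and on $t$ through the $L_i(t)$. The variational inclusion \eqref{eq:forcebalance} then says $w(t)\in C(t) = \partial_{\dot x}\RR(t,0)$ and, using the standard duality for the positively one-homogeneous dissipation $\RR$ (so that $\partial_{\dot x}\RR(t,\dot x)$ is the normal cone to $C(t)$ at the point selecting $\dot x$), one obtains $\dot w \in -\NN_{C(t)}(w)$ after eliminating the barycentric direction $e:=(1,\dots,1)$, which lies in the kernel of the shape dynamics. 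Here $C(t)$ is exactly the box $\prod_i[-\mu_i^-(t),\mu_i^+(t)]$ intersected with the relevant subspace, so assumptions \ref{hyp:C1}, \ref{hyp:C2} hold because the $\mu_i^\pm$ are Lipschitz, $T$-periodic and bounded between $\alpha_1$ and $\alpha_2$; moreover a box trivially satisfies LICQ at every point where constraints are active (the $e_i$ are orthonormal), so \ref{hyp:C3}, hence \ref{hyp:C4}, holds, and the uniqueness hypothesis \eqref{eq:uniqueness} guarantees the second equation of \eqref{eq:sweep_form_intro} is well-defined. This identification with Sec.~\ref{sec:theory} is the step I expect to require the most care: one must check that admissibility of $x(t_0)$ translates exactly into $w(t_0)\in C(t_0)$, that the splitting $X = Y\oplus Z$ is compatible with the inclusion, and that $\vm(t,\cdot)$ — the minimal-dissipation selection on the barycentric subspace — is the correct reconstruction map for $\dot y$, i.e. $\dot y(t) = \vm(t,-k^{-1}\dot w(t))$ as in \eqref{eq:sweep_form_intro}.

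Granting this reduction, apply Theorem \ref{th:attractor} to $w$: there is a $T$-periodic $\bar w\in\mathcal Z$ with $\|w(\cdot+qT)-\bar w(\cdot)\|_{W^{1,2}([t_0,t_0+T])}\to 0$. By \eqref{eq:eqder2} all periodic solutions share the derivative $\dot{\bar w}$, so the shape velocity is intrinsic to the gait. Now reconstruct: set $\bar p_Z := $ the shape part of the periodic orbit and define $\bar x(\xi;t)$ by integrating $\dot{\bar y}(t) := \vm(t,-k^{-1}\dot{\bar w}(t))$ together with $\bar w$; because $\bar w$ is $T$-periodic and $\vm(t,\cdot)$ is $T$-periodic in $t$, the function $t\mapsto \dot{\bar y}(t)$ is $T$-periodic, so $\bar y(t) = \bar y(t_0) + \int_{t_0}^t \dot{\bar y}$ decomposes as $\bar y_0 + (t-t_0)\bar v_0 + (\text{periodic})$ with $\bar v_0 := \frac1T\int_{t_0}^{t_0+T}\dot{\bar y}(s)\,\dd s$. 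Combined with the $T$-periodic shape this gives a running-periodic $\bar x$ of the form \eqref{eq:runningper}.

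For the $W^{1,2}$ convergence \eqref{eq:crawl_conv} of $x(\cdot+qT)$ to $\bar x$, I would argue componentwise: the shape component is handled directly by Theorem \ref{th:attractor} applied to $w$. For the barycentric component I would use that $\dot y(t) = \vm(t, -k^{-1}\dot w(t))$ and $\dot{\bar y}(t) = \vm(t,-k^{-1}\dot{\bar w}(t))$, where $\vm(t,\cdot)$ is, by the structure of the minimization over a subspace with the piecewise-linear selection of Lemma \ref{lem:indep_base} / Corollary \ref{cor:meas_decomp}, a Lipschitz (actually piecewise-linear) function of its second argument with constant uniform in $t$ (uniformity coming from \eqref{eq:uniqueness} bounding the relevant coefficients, analogous to the bounded $\gamma(t)$ in Corollary \ref{cor:meas_decomp}); hence $\|\dot y(\cdot+qT) - \dot{\bar y}(\cdot)\|_{L^2} \le \mathrm{Lip}(\vm)\,k^{-1}\|\dot w(\cdot+qT)-\dot{\bar w}(\cdot)\|_{L^2}\to 0$, and the $L^\infty$ convergence of $y(\cdot+qT)-\bar y(\cdot)$ on $[t_0,t_0+T]$ follows by integrating from $t_0$ once $|y(t_0+qT)-\bar y_0|\to 0$ is established (this last fact is where one uses that $\bar x_0$ must be chosen depending on $x(t_0)$ — the constant of integration is not intrinsic). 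Finally, for \eqref{eq:speed}: write $\frac{\pi_Y(x(t))-\pi_Y(x(t_0))}{t-t_0}$, split $[t_0,t]$ into full periods plus a remainder, and use that $\int_{t_0+qT}^{t_0+(q+1)T}\dot y$ converges to $\int_{t_0}^{t_0+T}\dot{\bar y} = T\bar v_0$ (a consequence of the $L^1$, a fortiori $L^2$, convergence of $\dot y(\cdot+qT)$ just proved) together with a Cesàro argument; the leftover fractional period contributes $o(t-t_0)$ because $\dot y$ is bounded (the $\mu_i^\pm$ and $\dot L_i$ are bounded, so $|\dot w|$ and hence $|\dot y|$ are bounded). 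This yields $v_0(\GG) = \bar v_0$, independent of $x(t_0)$ since $\dot{\bar w}$, and therefore $\dot{\bar y}$, is independent of the initial condition by \eqref{eq:eqder2}. The main obstacle, as noted, is the careful bookkeeping of the reduction in the first paragraph — in particular verifying the uniform-in-$t$ Lipschitz bound on $\vm$ and that the two equations in \eqref{eq:sweep_form_intro} are exactly equivalent to \eqref{eq:forcebalance} under the stated admissibility and uniqueness hypotheses.
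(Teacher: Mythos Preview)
Your overall strategy coincides with the paper's: reduce \eqref{eq:forcebalance} to a polyhedral sweeping process in the shape variable, apply Theorem \ref{th:attractor} and Corollary \ref{cor:meas_decomp}, reconstruct $\dot y$ via $\vm$, and conclude with a Ces\`aro argument together with \eqref{eq:eqder2} for independence from the initial state.

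There is, however, a genuine gap in your verification of \ref{hyp:C3}. The moving set for the reduced sweeping process is \emph{not} a box: it is the polytope $K(t)=\Csh(t)-\lsh(t)\subset Z\cong\R^{N-1}$, cut out by the $2N$ half-spaces $-\mu_i^-(t)\le\langle\pi_Z(e_i),z\rangle\le\mu_i^+(t)$. The relevant normals $\nu_i=\pi_Z(e_i)$ are $N$ vectors in an $(N-1)$-dimensional space, hence linearly dependent, so LICQ does not follow from the orthonormality of the $e_i$ in $X$. In the paper this is Lemma \ref{lem:licqstar}: LICQ for $K(t^*)$ holds if and only if the inequality in \eqref{eq:uniqueness} holds at $t=t^*$; thus \eqref{eq:uniqueness} is precisely what delivers \ref{hyp:C3}, not merely the well-posedness of $\vm$ as you suggest (it does both, but you have the logical dependency inverted). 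Without this step you cannot invoke Corollary \ref{cor:meas_decomp}, and the uniform-in-$t$ Lipschitz bound on $\vm$ that you flag as needing the bounded-$\gamma$ argument would be unsupported.

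On the barycentric convergence there is a minor methodological difference worth noting. The paper does not estimate $\|\dot y(\cdot+qT)-\dot{\bar y}\|_{L^2}$ via a Lipschitz bound on $\vm$; instead it uses Proposition \ref{prop:lambda_same} and \eqref{eq:vmdec} to express $\vm$ explicitly as the linear functional $\frac{1}{N}\sum_{i\le N}\lambda_i-\frac{1}{N}\sum_{i>N}\lambda_i$ of the decomposition coefficients of $\dot z$, and then applies the $L^2$-convergence of each $\lambda_i(\cdot+qT)\to\bar\lambda_i$ from Corollary \ref{cor:meas_decomp} directly. Your route is legitimate once LICQ is correctly established (finitely many face configurations under \ref{hyp:C4} give a uniform Lipschitz constant), but the paper's version makes the reliance on Corollary \ref{cor:meas_decomp} transparent rather than hiding it inside an a priori estimate on $\vm$.
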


In order to prove Theorem \ref{th:speed}, we will reformulate 
the evolution equation as a suitable sweeping process, in order to apply the theoretical results 
of Section~\ref{sec:theory}.

\subsubsection*{Formulation as sweeping process}

We notice that the elastic energy of our locomotor depends only on the shape component $z=\pi_Z(x)$, since it is invariant 
with respect to rigid translations, namely changes in the  $y=\pi_Y(x)$ component. Indeed, we can write
\begin{equation}\label{eq:linkenergy2}
\EE(t,x)=\frac12 \scal{k \pi_Z(x)-\lsh(t)}{\pi_Z(x)}+\text{time-dependent term}\qquad
\end{equation}
where $ \lsh(t)=\bigl(2kL_1(t),\dots,2kL_{N-1}(t)\bigr)$ and the time-dependent term is
irrelevant for the dynamics \eqref{eq:forcebalance}. 

It is well known \cite{MieThe,MieRou} that the force balance \eqref{eq:forcebalance} can be rewritten equivalently as the variational inequality
\begin{equation}
\scal{k \pi_Z(x(t))-\lsh (t)}{\pi_Z(u-\dot x(t))}+\RR(t,u)-\RR(t,\dot x(t))\geq 0 \quad \text{for every $u\in X$} 
\label{eq:VI}
\end{equation}
From this we deduce straightforwardly that $x(t)$ is a solution of 
\eqref{eq:VI} only if $z(t)=\pi_Z(x(t))$ satisfies:
\begin{equation}
\scal{k z(t)-\lsh (t)}{w-\dot z(t)}+\RRsh (t,w)-\RRsh (t,\dot z(t))\geq 0 \quad \text{for every $w\in Z$}
\label{eq:RVI}
\end{equation}
where $\RRsh $ is the \lq\lq shape-restricted\rq\rq\ dissipation, i.e.~the dissipation after minimization with respect to 
translations of the crawler, defined as
\begin{equation} \label{eq:RRsh}
\RRsh (t,z)=\min \bigl\{\RR(t,x):x\in X, \pi_Z(x)=z\bigr\}
\end{equation}
We remark that $\RRsh$ is well defined, since $\RR$ is convex and coercive.
Moreover, the variational inequality contains the key information on the evolution of the problem, and thus will be 
the main object of our study. 
Indeed, by \cite[Theorem 4.3 and Lemma 5.1]{Gid18}, the assumption \eqref{eq:uniqueness} is sufficient 
to assure uniqueness of solution for 
the initial value problems associated to \eqref{eq:VI}. Moreover, this implies that for almost every 
$t\in[t_0,+\infty)$, namely for all 
the times $t$ for which \eqref{eq:uniqueness} holds, we can  define the function $\vm(t,\cdot)\colon Z\to Y$ as 
the unique one satisfying 
\begin{equation}
\RRsh(t,\pi_Z(x))=\RR(t,x) \qquad \text{if and only if}\qquad  \pi_Y(x)=\vm(\pi_Z(x))  
\end{equation}
The function $\vm$ is  positively homogeneous of degree one in $z$  and every solution $x(t)$ of \eqref{eq:forcebalance} satisfies for almost every $t$
\begin{equation} \label{eq:Yrecover}
	\dot y(t)=\pi_Y(\dot x(t))=\vm(t,\pi_Z(\dot x(t)))=\vm(t,\dot z(t))
\end{equation}
We therefore see that, assuming \eqref{eq:uniqueness}, each solution of \eqref{eq:forcebalance} is easily 
recovered once we solve \eqref{eq:RVI}. Rewriting \eqref{eq:RVI} in the the same form of \eqref{eq:forcebalance} we obtain
\begin{equation} \label{eq:Rfb}
-k z +\lsh(t)\in \partial_{\dot z}\RRsh(t,\dot z)
\end{equation}
The function $\RRsh(t,\cdot)\colon Z\to \R$ is convex  and positively homogeneous of degree one 
\cite[Lemma 2.1]{Gid18}. Let us hence denote by $\RRsh^*(t,\cdot)$ 
the Legendre transform of $\RRsh(t,\cdot)$, by $\partial_{\zeta}\RRsh^*$  the subdifferential in the
second variable, $\Csh(t):=\partial_{\dot z}\RRsh(t,0)$, and by
$\chi_{C}$ the characteristic function  associated to a set $C$. 
Applying the Legendre-Fenchel equivalence to \eqref{eq:Rfb}, 
 we obtain
\begin{align} \label{eq:Rincl}
\dot z\in \partial_{\zeta}\RRsh^*(t,-k z +\lsh(t))&={\partial_{\zeta}}\chi_{\Csh(t)}(-k z +\lsh(t))\notag\\
&=\NN_{\Csh(t)}(-k z +\lsh(t))
\end{align}
where we have set $\Csh(t):=\partial_{\dot z}\RRsh(t,0)$ and the first equality
follows from the fact that $\RR(t,\cdot)$ 
is positively homogeneous of degree one.

  Moreover, recalling that $e_1,\dots,e_N$ denotes the canonical base of $X$, 
by \cite[Lemma 2.2]{Gid18} we deduce  that
\begin{equation}
	\Csh(t)=\{z\in Z : -\mu_i^-(t)\leq\scal{\pi_Z(e_i)}{z}\leq \mu_i^+(t) \quad \text{for $i=1,\dots,N$}\}
\end{equation}
The term $\lsh(t)$ in the last term of \eqref{eq:Rincl} can be included in the changes in the set, leading to
\begin{equation} \label{eq:Rincl2}
\dot z\in \NN_{K(t)}(-k z ) \qquad\text{with $K(t):=\Csh(t)-\lsh(t)$}
\end{equation}
Writing $w:=-k z$, and multiplying  both sides of \eqref{eq:Rincl2} by $-k$, we obtain
\begin{equation}  \label{eq:finite_sweep}
\dot w\in -\NN_{K(t)}(w).
\end{equation}
The set $K(t)$ straightforwardly satisfies \ref{hyp:C1} and \ref{hyp:C2}, with $m=2N$. 
Moreover, also assumption 
\ref{hyp:C3} holds, as a consequence of \eqref{eq:uniqueness}.  Indeed, as we will
discuss later (see Lemma \ref{lem:licqstar} below), one can see that LICQ 
holds for $K(t^*)$ if and only if the inequality in \eqref{eq:uniqueness} is true at $t=t^*$.

\subsubsection*{The function $\vm$}
First of all, we notice that the same transformation used in \eqref{eq:Rincl}, based on the Legendre-Fenchel 
equivalence,  can be applied also to  \eqref{eq:forcebalance}, yielding
\begin{align} \label{eq:incl}
\dot x\in \partial_{\xi}\RR^*(t,-D_x\EE(t,x))&={\partial_{\xi}}\chi_{C(t)}(-D_x\EE(t,x))\notag\\ &=\NN_{C(t)}(-D_x\EE(t,x))
\end{align}
where  $\RR^*(t,\cdot)$ denotes the Legendre transform of $\RR(t,\cdot)$, $\partial_{\xi}$  the subdifferential in the second variable,  and $C(t)$ is defined in \eqref{eq:defC}. 

We highlight that, because of the invariance for translations of the energy $\EE$, the vector $-D_x\EE(t,x)$ is not free to vary in the 
whole set $C(t)$, but it is actually contained in the section $C(t)\cap \pi^{-1}_Y(0)$.

As shown in \cite[Lemma 2.3]{Gid18}, if the inequality in \eqref{eq:uniqueness} holds at time $t=t^*$, then for every $\dot z\in Z$ there exists a unique 
$\dot x\in X$ such that $\dot z=\pi_Z(\dot x)$ and $\dot x\in \NN_{C(t^*)}(\xi)$ for some $\xi\in \pi^{-1}_Y(0)$. 
We prove here a slightly stronger result: at such times $t^*$, the 
decompositions of $\dot x(t^*)\in \NN_{C(t^*)}(x(t^*))$ and $\dot z(t^*)\in \NN_{\Csh(t^*)}(z(t^*))$ 
according to Lemma~\ref{lem:indep_base} are characterized by the very same coefficients $\lambda_1,\dots,\lambda_{2N}$.

Let us take $\xi=(\xi_1,\dots,\xi_N)\in C(t)$ and define the set of active constraints $\JJ(t,\xi)$ as
\begin{equation*}
\JJ(t,\xi)=\bigl\{j\in\{1,\dots,N\}: \xi_j= -\mu_j^-(t) \bigr\}\cup \bigl\{j\in\{N+1,\dots,2N\}: \xi_{j-N}= \mu_{j-N}^+(t) \bigr\}
\end{equation*}
Since the coefficient $\mu_j^\pm$ are always positive, it is easily verified that LICQ holds for $C(t)$ at every time. In particular, we emphasize that at most one constraint within each pair $\{i,i+N\}$ can be active for the same couple $(t,\xi)$.

Hence, by Lemma \ref{lem:indep_base}, we know that, for every  $v\in \NN_{C(t)}(\xi)$ there exists a unique choice of coefficients $\lambda_i\geq 0$, $i=1\dots, 2N$,
\begin{equation}\label{eq:decomp}
v=\sum_{i=1}^N \lambda_i e_i+\sum_{i=N+1}^{2N} -\lambda_i e_{i-N} \qquad\text{where $\lambda_i>0 \Rightarrow i\in \JJ(t,\xi) $}
\end{equation}
The choice of the coefficients $\lambda_i$ does not depend on $\xi$, and the coefficients $\lambda_i$ are Lipschitz continuous with respect to $v$.

Let us set $\nu_i=\pi_Z(e_i)$ for $i=1\dots,N$ and $\nu_i=\pi_Z(-e_{i-N})$ for $i=N+1\dots,2N$. Analogously to $\JJ$, we introduce the set of active constraints $\JJsh(t,\zeta)$ for $\zeta\in\Csh(t)$, so that the vector $\nu_i$, for $i=1,\dots,2N$, is associated to the $i$-th constraint for $\Csh$. We have that for every $t$ and every $\zeta\in\Csh(t)$ holds
\begin{equation}\label{eq:Jrel}
	\JJsh(t,\zeta)=\JJ(t,\xi_\zeta) \qquad\text{where $\xi_\zeta:=\pi^{-1}_Z(\zeta)\cap\pi^{-1}_Y(0)\in C(t)\cap \pi^{-1}_Y(0)$}.
\end{equation}
Let us now consider a vector $w\in Z$. Since $C(t)\cap \pi^{-1}_Y(0)$ is a (non-empty) section of a convex set, there exist $\xi\in \pi^{-1}_Y(0)$ and $v\in \NN_{C(t)}(\xi)$ such that $w=\pi_Z(v)$. By this and \eqref{eq:decomp} we obtain that
\begin{equation}\label{eq:z_decomp}
w=\pi_Z(v)=\sum_{i=1}^{2N} \lambda_i \nu_i,
\end{equation}
where the coefficients $\lambda_i$ are the same obtained in \eqref{eq:z_decomp} for $v$. In particular, $v\in \NN_{C(t)}(\xi)$ for some $\xi\in \pi_Y^{-1}(0)$ if and only if $\pi_Z(v)\in \NN_{\Csh(t)}(\pi_{Z}(\xi))$.

We cannot however tell yet whether the decomposition \eqref{eq:z_decomp} is unique for every $w$, since it may be that there exists multiple suitable vectors $v$ such that $\pi_{Z}(v)=w$. To show this, condition \eqref{eq:uniqueness} comes into play.

\begin{lemma} \label{lem:licqstar}
	\textup{LICQ} holds for $\Csh(t^*)$ if and only if the inequality in \eqref{eq:uniqueness} holds at time $t=t^*$.
\end{lemma}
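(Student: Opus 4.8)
The plan is to unwind the definition of LICQ for $\Csh(t^*)$ into a concrete linear-algebra condition on the generating vectors $\nu_i=\pi_Z(\pm e_j)$, and then recognize that condition as exactly the non-vanishing of $\sum_{i\in J}\mu_i^+ - \sum_{i\in J^c}\mu_i^-$. First I would recall that, by the explicit description of $\Csh(t)$ given above, this set is a polytope in $Z\cong\R^{N-1}$ cut out by the $2N$ inequalities $\scal{\nu_i}{z}\le (\text{const}_i)$, where $\nu_i=\pi_Z(e_i)$ for $i\le N$ and $\nu_i=\pi_Z(-e_{i-N})$ for $i>N$. LICQ at $t^*$ means: for every $\zeta\in\Csh(t^*)$, the family $\{\nu_i : i\in\JJsh(t^*,\zeta)\}$ is linearly independent. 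Since at most one constraint from each antipodal pair $\{i,i+N\}$ can be active at a given point (the coefficients $\mu_j^\pm$ are strictly positive, so the two corresponding slabs have nonempty interior separation), an active set $\JJsh(t^*,\zeta)$ corresponds to a choice of a subset $S\subseteq\{1,\dots,N\}$ of indices together with a sign $\epsilon_j\in\{+,-\}$ for each $j\in S$, and the relevant vectors are $\{\epsilon_j\,\pi_Z(e_j) : j\in S\}$; linear independence of this set is equivalent to linear independence of $\{\pi_Z(e_j):j\in S\}$.

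The combinatorial core is then: the vectors $\{\pi_Z(e_j):j\in S\}$ are linearly dependent for some $S$ that is realizable as an active face if and only if \eqref{eq:uniqueness} fails at $t^*$. For the linear-algebra part, note that $\pi_Z(e_j)=(0,\dots,0,-1,1,0,\dots,0)$ — more precisely, recalling $\pi_Z(x)=(x_2-x_1,\dots,x_N-x_{N-1})$, we have $\pi_Z(e_j)= f_{j} - f_{j-1}$ where $f_0=f_N=0$ and $f_1,\dots,f_{N-1}$ is the canonical basis of $Z$. A subset $\{\pi_Z(e_j):j\in S\}$ is linearly dependent precisely when $S=\{1,\dots,N\}$ (the full set), since $\sum_{j=1}^N\pi_Z(e_j)=0$ and any proper subset is independent (one checks that a proper subset of these "difference" vectors is linearly independent because the corresponding edges of the path graph on $N$ vertices form a forest). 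Hence the only way LICQ can fail is if the active set is the full index set $\{1,\dots,N\}$ with some sign choice $\epsilon_j$, i.e.\ there is a point $\zeta\in\Csh(t^*)$ at which all $N$ constraints $\scal{\pi_Z(e_j)}{\zeta}\in\{-\mu_j^-(t^*),\mu_j^+(t^*)\}$ are simultaneously active.

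The final step is to show that such a fully-active point exists if and only if \eqref{eq:uniqueness} fails at $t^*$. Fix a sign vector $\epsilon$ and set $J=\{j:\epsilon_j=+\}$; the requirement is that the affine system $\scal{\pi_Z(e_j)}{\zeta}=\mu_j^+(t^*)$ for $j\in J$ and $\scal{\pi_Z(e_j)}{\zeta}=-\mu_j^-(t^*)$ for $j\notin J$ be consistent. Since $\sum_j\pi_Z(e_j)=0$, applying $\sum_j$ to the left-hand sides gives $0$, so consistency forces $\sum_{j\in J}\mu_j^+(t^*)-\sum_{j\in J^c}\mu_j^-(t^*)=0$; conversely, when this sum vanishes the system is consistent on the codimension-one subspace spanned by the $\pi_Z(e_j)$ (which is all of $Z$, as these vectors span $Z$), and the resulting $\zeta$ automatically lies in $\Csh(t^*)$ since each constraint is met with equality hence with $\le$. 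Therefore LICQ fails at $t^*$ iff $\sum_{j\in J}\mu_j^+(t^*)=\sum_{j\in J^c}\mu_j^-(t^*)$ for some $J\subseteq\{1,\dots,N\}$, which is exactly the negation of \eqref{eq:uniqueness} at $t^*$; equivalently LICQ holds iff \eqref{eq:uniqueness} holds. I expect the main obstacle to be the bookkeeping in the second step — carefully justifying that the only linearly dependent realizable active set is the full one, and correctly tracking how the antipodal-pair structure reduces the sign choices to a single subset $J$; the first and third steps are essentially formal once the description of $\Csh(t^*)$ and the identity $\sum_j\pi_Z(e_j)=0$ are in hand.
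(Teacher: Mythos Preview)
Your argument is correct and essentially complete; the only slip is the formula $\pi_Z(e_j)=f_j-f_{j-1}$, which should be $\pi_Z(e_j)=f_{j-1}-f_j$ with the convention $f_0=f_N=0$ (check $j=1$: $\pi_Z(e_1)=(-1,0,\dots,0)=-f_1$). This sign does not affect anything you use, namely $\sum_j\pi_Z(e_j)=0$ and the linear independence of any proper subfamily.

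Your route differs from the paper's in how the existence of a fully active point is handled. The paper passes through the identification $\JJsh(t^*,\zeta)=\JJ(t^*,\xi_\zeta)$ with $\xi_\zeta\in C(t^*)\cap\pi_Y^{-1}(0)$ and then invokes an external result, Lemma~3.2 of \cite{Gid18}, to conclude that the active set at such $\xi_\zeta$ has at most $N-1$ elements precisely when \eqref{eq:uniqueness} holds, and can have $N$ elements otherwise. You instead stay entirely within $\Csh(t^*)$ and reduce the question to the solvability of the overdetermined linear system $\scal{\pi_Z(e_j)}{\zeta}=c_j$, $j=1,\dots,N$, using that the unique relation among the rows is their sum; this yields the compatibility condition $\sum_{j\in J}\mu_j^+(t^*)-\sum_{j\in J^c}\mu_j^-(t^*)=0$ directly. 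The advantage of your approach is that it is self-contained and makes the role of \eqref{eq:uniqueness} transparent as a Fredholm-type compatibility condition; the paper's version is shorter on the page but outsources the substantive step to \cite{Gid18}.
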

\begin{proof}
By \eqref{eq:Jrel}, to prove that the inequality in \eqref{eq:uniqueness} implies LICQ for $\Csh(t^*)$, we have to show that for any $\xi\in C(t)\cap \pi^{-1}_Y(0)$ the vectors $\{\nu_i, i\in J(t^*,\xi)\}$ are linearly independent. First, we notice that any $(N-1)$ vectors $\nu_i$ are linearly independent if and only if  at most one vector from each pair $\{\nu_i,\nu_{i+N}\}$ is included. This latter condition is always satisfied when the indices are taken in $\JJ(t^*,\xi)$, as we observed above in this subsection. Hence to prove LICQ it remains to show that, given \eqref{eq:uniqueness} at $t=t^*$, each set $J(t^*,\xi)$ with $\xi\in C(t)\cap \pi^{-1}_Y(0)$ has at most $N-1$ elements. This fact was proved in \cite[Lemma 3.2]{Gid18}.

To show the reverse implication, suppose by contradiction that the inequality in \eqref{eq:uniqueness} does not hold at time $t=t^*$. Again by \cite[Lemma 3.2]{Gid18}, there exists $\xi \in C(t)\cap \pi^{-1}_Y(0)$ such that $\JJ(t^*,\xi)$ has exactly $N$ elements. Since $N$ vectors $\nu_i$ can never be linearly independent, since $\dim Z=N-1$, we deduce that LICQ fails for $\Csh(t^*)$.
\end{proof}
Notice that LICQ for $\Csh(t^*)$ is equivalent to LICQ for $K(t^*)$, since the two sets differ only by a translation.

Using all of the arguments in this subsection and Lemma~\ref{lem:indep_base} we deduce the following fact. 

\begin{prop} \label{prop:lambda_same}
Let $z(t)$ be a solution of \eqref{eq:Rincl}. Suppose that at time $t=t^*$ the inequality in \eqref{eq:uniqueness} holds and $\dot z$ is well defined. Then there exists a unique choice of non-negative coefficients $\lambda_1,\dots\lambda_{2N}$ such that
\begin{equation}\label{eq:dotz_decomp}
\dot z(t^*)=\sum_{i=1}^{2N} \lambda_i \nu_i \qquad\text{where $\lambda_i>0 \Rightarrow i\in \JJsh(t,z(t)) $}
\end{equation}
Moreover, let $x(t)$ be a corresponding solution of the original problem \eqref{eq:forcebalance}, hence satisfying $z(t)=\pi_{Z}(x(t))$. 
Then 
\begin{equation*}
\dot x(t^*)=\sum_{i=1}^N \lambda_i e_i+\sum_{i=N+1}^{2N} -\lambda_i e_{i-N}
\end{equation*}
for the very same coefficients $\lambda_i$.
\end{prop}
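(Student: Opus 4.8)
The plan is to read this proposition as a bookkeeping consequence of Lemma~\ref{lem:indep_base}, Lemma~\ref{lem:licqstar}, and the identity \eqref{eq:Jrel} relating the active faces of $\Csh(t)$ and of $C(t)$. First I would settle the part concerning $\dot z(t^*)$ alone. Since $z(\cdot)$ solves \eqref{eq:Rincl} we have $\dot z(t^*)\in\NN_{\Csh(t^*)}(-k z(t^*)+\lsh(t^*))$, and since the inequality in \eqref{eq:uniqueness} holds at $t^*$, Lemma~\ref{lem:licqstar} yields that \textup{LICQ} is valid for $\Csh(t^*)$. Applying Lemma~\ref{lem:indep_base} to the polyhedron $\Csh(t^*)$, whose facet normals are the vectors $\nu_i$, then gives at once the existence and the uniqueness of nonnegative coefficients $\lambda_1,\dots,\lambda_{2N}$ with $\dot z(t^*)=\sum_i\lambda_i\nu_i$ and $\lambda_i>0\Rightarrow i\in\JJsh(t^*,-k z(t^*)+\lsh(t^*))$, which is \eqref{eq:dotz_decomp}.

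For the correspondence with $\dot x$, I would start from the companion inclusion \eqref{eq:incl}, which gives $\dot x(t^*)\in\NN_{C(t^*)}(-D_x\EE(t^*,x(t^*)))$, the base point lying in the section $C(t^*)\cap\pi_Y^{-1}(0)$ by the translation-invariance of $\EE$ highlighted after \eqref{eq:incl}. Because the coefficients $\mu_j^\pm$ are strictly positive, \textup{LICQ} holds for $C(t^*)$ unconditionally, so \eqref{eq:decomp} produces unique nonnegative $\mu_1,\dots,\mu_{2N}$ with $\dot x(t^*)=\sum_{i=1}^N\mu_i e_i-\sum_{i=N+1}^{2N}\mu_i e_{i-N}$ and $\mu_i>0\Rightarrow i\in\JJ(t^*,-D_x\EE(t^*,x(t^*)))$. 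Projecting by $\pi_Z$ and using $\nu_i=\pi_Z(e_i)$, resp.~$\pi_Z(-e_{i-N})$, gives $\dot z(t^*)=\pi_Z(\dot x(t^*))=\sum_i\mu_i\nu_i$; furthermore, by the equivalence stated just after \eqref{eq:z_decomp} the vector $\pi_Z(-D_x\EE(t^*,x(t^*)))$ is an admissible base point for $\dot z(t^*)$ relative to $\Csh(t^*)$, and by \eqref{eq:Jrel} the support condition on the $\mu_i$ transfers to $\mu_i>0\Rightarrow i\in\JJsh(t^*,\pi_Z(-D_x\EE(t^*,x(t^*))))$. Hence $(\mu_i)$ is a representation of $\dot z(t^*)$ of the type covered by Lemma~\ref{lem:indep_base}, and the uniqueness part of that lemma (using \textup{LICQ} for $\Csh(t^*)$, already secured) forces $\mu_i=\lambda_i$ for every $i$, which is the assertion on $\dot x(t^*)$.

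I expect the only genuinely delicate point to be this last identification. The two natural base points attached to $\dot z(t^*)$ --- namely $-k z(t^*)+\lsh(t^*)$ from \eqref{eq:Rincl} and $\pi_Z(-D_x\EE(t^*,x(t^*)))$ from the projected decomposition --- are not related in an obvious way, and $\pi_Z$ is not injective, so a priori the $\nu_i$-decomposition of $\dot z(t^*)$ need not be the $\pi_Z$-image of the $e_i$-decomposition of $\dot x(t^*)$. What closes the gap is exactly the content of Lemma~\ref{lem:indep_base}: under \textup{LICQ} the coefficients are unique \emph{and} independent of the admissible base point, so it is enough that \emph{some} admissible base point realizes the required support condition, which is what \eqref{eq:z_decomp} together with \eqref{eq:Jrel} supply. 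A minor technical caveat is that the statement is used at times $t^*$ where $\dot x(t^*)$ exists as well; this holds for a.e.~$t^*$ because $x(\cdot)$ is absolutely continuous, and in any event \cite[Lemma~2.3]{Gid18} shows that, once $\dot z(t^*)$ exists and \eqref{eq:uniqueness} holds at $t^*$, the vector $\dot x(t^*)$ is already uniquely determined by $\dot z(t^*)$.
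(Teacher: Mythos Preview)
Your argument is correct and matches the paper's own route: the paper does not give a separate proof but declares the proposition to follow ``using all of the arguments in this subsection and Lemma~\ref{lem:indep_base}'', and what you have written is precisely that chain --- Lemma~\ref{lem:licqstar} for LICQ of $\Csh(t^*)$, Lemma~\ref{lem:indep_base} for existence/uniqueness of the $\nu_i$-decomposition, the inclusion \eqref{eq:incl} together with \eqref{eq:decomp} for the $e_i$-decomposition of $\dot x$, and \eqref{eq:Jrel} with the base-point independence in Lemma~\ref{lem:indep_base} to identify the two sets of coefficients. Your explicit handling of the two a priori different base points is exactly the point the paper leaves implicit.
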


This implies that the  decomposition \eqref{eq:dotz_decomp} of $\dot z$ can be used to give an explicit expression for $\vm(t,\cdot)$ at the times where the condition in
\eqref{eq:uniqueness} holds:
\begin{equation}\label{eq:vmdec}
\vm\left(t,\sum_{i=1}^{2N} \lambda_i\nu_i\right)=
\sum_{i=1}^N \frac{\lambda_i}{N}+\sum_{i=N+1}^{2N} -\frac{\lambda_i}{N}
\end{equation}
where we used the fact that $\pi_Y(e_i)=\frac{1}{N}$.
We observe that \eqref{eq:vmdec} in particular implies that $\vm(t,\cdot)$ is Lipschitz continuous. 

We emphasize that the time-dependence of $\vm$ has not disappeared in the right-hand side, but has only become implicit: indeed the coefficients $\lambda_i$ depends not only on the vector $w$, but also at the time $t$ at which the decomposition is computed. That is, at different times the same vector $w$ may have different coefficients in \eqref{eq:z_decomp}. 

\subsubsection*{Proof of Theorem \ref{th:speed}}

By \eqref{eq:finite_sweep} and \eqref{eq:Yrecover}, we deduce that the locomotion problem can be expressed in the form
\begin{equation}\label{eq:sweep_form}
\begin{cases}
\dot w\in -\NN_{K(t)}(w) \\
\dot y= \vm(t,-k^{-1} \dot w)
\end{cases}
\end{equation} 
where we recall that $w=-kz$.

Let us first study the sweeping process in \eqref{eq:sweep_form}. We observe that the set $K(t)$ satisfies \ref{hyp:C1}, \ref{hyp:C2} and, by Lemma \ref{lem:licqstar}, also \ref{hyp:C3}.

We can therefore apply  Theorem \ref{th:attractor} and Corollary \ref{cor:meas_decomp}. Hence, for every admissible 
initial condition $w(t_0)\in K(t_0)$, the corresponding solution $w(t)$ of \eqref{eq:finite_sweep} will converge to a 
periodic solution $\bar w$. Denoting with $\lambda_i(t)$ and $\bar\lambda_i(t)$ their corresponding component of the 
decomposition obtained by Lemma \ref{lem:meas_sel}, using Corollary \ref{cor:meas_decomp} we deduce for 
$i=1,\dots 2N$ that
\begin{equation} \label{eq:conv_doty}
\lim_{q\to+\infty}\norm{ \lambda_i(t+qT)-\bar \lambda_i(t+qT)}_{L^1([t_0,t_0+T],\R)}=0.
\end{equation}
Considering now the second equation in \eqref{eq:sweep_form}, by Proposition \ref{prop:lambda_same} 
and equation \eqref{eq:vmdec} we get
\begin{align}
y(t_2)-y(t_1)&=\int_{t_1}^{t_2}\vm(t,-k^{-1} \dot w)\dd t =\int_{t_1}^{t_2}\vm\left(t,-\frac{1}{k}
    \sum_{i=1}^{2N} \lambda_i(t)\nu_i\right)\dd t \notag\\
&=
-\frac{1}{kN}\sum_{i=1}^N \int_{t_1}^{t_2}\lambda_i(t)\dd t +\frac{1}{kN}\int_{t_1}^{t_2}\sum_{i=N+1}^{2N} \lambda_i(t)\dd t
\end{align}
and, analogously, for the limit periodic solution $\bar w$
\begin{align}
\bar y(t_2)-\bar y(t_1)=
-\frac{1}{kN}\sum_{i=1}^N \int_{t_1}^{t_2}\bar\lambda_i(t)\dd t +\frac{1}{kN}\int_{t_1}^{t_2}\sum_{i=N+1}^{2N} \bar\lambda_i(t)\dd t.
\end{align}
Hence, by Corollary \ref{cor:meas_decomp}, we have
\begin{equation} \label{eq:speed_conv}
\lim_{q\to+\infty}\left(	\frac{y(t_0+qT)-y(t_0+(q-1)T)}{T}-\frac{\bar y(t_0+qT)-\bar y(t_0+(q-1)T)}{T}\right)=0.
\end{equation}
On the other hand, by the periodicity of $\dot{\bar y}$ we have
\begin{equation}\label{eq:periodic_speed}
	\lim_{t\to+\infty}\frac{\bar y(t)-\bar y(t_0)}{t-t_0}=\frac{\bar y(t_0+T)-\bar y(t_0)}{T}=:\bar v_0.
\end{equation}
Hence, combining \eqref{eq:speed_conv} and \eqref{eq:periodic_speed} we get
\begin{equation*} 
\lim_{q\to+\infty}	\frac{y(t_0+qT)-y(t_0+(q-1)T)}{T}=\bar v_0.
\end{equation*}
Since, given a converging sequence $a_n\to\bar a$, the sequence $A_q=\frac{1}{q}\sum_{i=1}^qa_i$ of the arithmetic 
means converges to the same limit $A_q\to \bar a$, we obtain
\begin{equation}
\lim_{t\to+\infty}\frac{y(t)-y(t_0)}{t-t_0}=\bar v_0.
\end{equation}
To conclude the proof, we observe that, as a consequence of equation \eqref{eq:eqder2} in Theorem \ref{th:gudmak}, the value $\bar v_0$ of the corresponding asymptotic velocity is the same for every periodic solution of \eqref{eq:finite_sweep}; 
hence also every non-periodic solution will have the same asymptotic velocity $\bar v_0$.

Noticing that the operator $(\pi_Y,\pi_Z)\colon \R^N\to \R\times \R^{N-1}$ is linear and invertible, by the $W^{1,2}$ asymptotic convergences of $y\to \bar y$ and $w\to \bar w$ we deduce \eqref{eq:crawl_conv}.

\section{Some results on finite-time convergence} \label{sec:finite}

\subsection{Finite-time convergence for a periodic $n$-cell}

Let us first consider the case of a one dimensional sweeping process

\begin{equation}\label{eq:1dim}
\dot x\in -\NN_{[a(t),b(t)]}(x),
\end{equation}
where $a(t)\leq b(t)$ are Lipschitz continuous, $T$-periodic real functions.

\begin{theorem}\label{th:1dim}
Every solution of \eqref{eq:1dim} with initial condition $x(0)\in[a(0),b(0)]$ is $T$-periodic in $[T,+\infty)$.
\end{theorem}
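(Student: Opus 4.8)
The plan is to compare the given solution $x$ with its $T$-shift and exploit forward uniqueness together with the rigid sign structure of $\dot x$ in one dimension.

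First I would set $y(t):=x(t+T)$. Since $t\mapsto[a(t),b(t)]$ is $T$-periodic (and is a special instance of the moving set in \eqref{eq:sweepingproc}, so forward uniqueness applies), $y$ is again a solution of \eqref{eq:1dim} on $[0,+\infty)$, with admissible initial datum $y(0)=x(T)\in[a(T),b(T)]=[a(0),b(0)]$. By forward uniqueness it therefore suffices to exhibit a single time $s_0\in[0,T]$ with $x(s_0)=y(s_0)$: then $x(\cdot)$ and $y(\cdot)=x(\cdot+T)$ coincide on $[s_0,+\infty)\supseteq[T,+\infty)$, which is precisely $T$-periodicity of $x$ on $[T,+\infty)$.

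To produce such an $s_0$, suppose for contradiction that $d(t):=x(t+T)-x(t)$ never vanishes on $[0,T]$. Being continuous, $d$ has constant sign there; by symmetry assume $d>0$ on $[0,T]$, i.e.\ $x(t+T)>x(t)$ for all $t\in[0,T]$. The key point is the one-dimensional rigidity of $\dot x$: for a.e.\ $t$, the inclusion $\dot x(t)\in-\NN_{[a(t),b(t)]}(x(t))$ forces $\dot x(t)>0$ only if $x(t)=a(t)$ and $\dot x(t)<0$ only if $x(t)=b(t)$. Applying this to $y$ on $[0,T]$, and using $a(t+T)=a(t)$: if $\dot y(t)>0$ then $y(t)=a(t)$, which is impossible since $y(t)=x(t+T)>x(t)\ge a(t)$. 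Hence $\dot y(t)\le 0$ for a.e.\ $t\in[0,T]$, and since $y(0)=x(T)$,
\begin{equation*}
d(T)=y(T)-x(T)=y(T)-y(0)=\int_0^T \dot y(t)\,\dd t\le 0,
\end{equation*}
contradicting $d(T)>0$. The case $d<0$ on $[0,T]$ is symmetric: $\dot y(t)<0$ would force $y(t)=b(t)\ge x(t)>y(t)$, absurd, so $\dot y\ge 0$ a.e.\ on $[0,T]$ and $d(T)\ge 0$, again a contradiction. Thus $d$ vanishes at some $s_0\in[0,T]$, and the reduction above concludes the proof.

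I expect the only genuinely delicate point to be the combined use of the sign rigidity of $\dot y$ and the state constraint ($y\ge a$, resp.\ $y\le b$) to turn a constant-sign gap into monotonicity of $y$ on $[0,T]$; the reduction to a single coincidence time and the appeal to forward uniqueness are routine.
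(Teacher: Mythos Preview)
Your argument is correct. The reduction to a single coincidence time $s_0\in[0,T]$ via forward uniqueness is clean, and the contradiction is airtight: under $d>0$ on $[0,T]$ you have $y(t)>x(t)\ge a(t)$, so the one-dimensional sign rigidity kills every instant where $\dot y>0$, forcing $y$ nonincreasing and hence $d(T)=y(T)-y(0)\le 0$. The symmetric case is identical.

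Your route is genuinely different from the paper's. The paper first invokes the structure theorem for periodic solutions (Theorem~\ref{th:gudmak}) to obtain the family $\{\bar z + c : c\in[\bar a,\bar b]\}$, then argues that the extreme periodic solutions $\bar z+\bar a$ and $\bar z+\bar b$ must each touch the corresponding boundary $a(\cdot)$, resp.\ $b(\cdot)$, at some time in $[0,T)$; any solution starting above $\bar z(0)+\bar b$ is squeezed down to $b(t_b)=\bar z(t_b)+\bar b$ at that touching time and merges with the extreme periodic solution by forward uniqueness. Your approach bypasses Theorem~\ref{th:gudmak} entirely: you never need to know that periodic solutions exist, nor what they look like, because you compare $x$ directly with its own $T$-shift. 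This makes your proof more self-contained and slightly shorter. What the paper's argument buys in exchange is extra structural information along the way---the explicit interval parametrization of $\mathcal Z$ and the boundary-touching property of the extremal periodic orbits---which your argument does not need and does not produce.
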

\begin{proof}
Let us make some preliminary observations. Equation \eqref{eq:1dim} has forward-in-time uniqueness of solution; 
this implies that given two solutions $x_1,x_2$ such that $x_1(t_0)\leq x_2(t_0)$ for a certain time $t_0$, we have $x_1(t)\leq x_2(t)$ for every $t\geq t_0$.

By Theorem \ref{th:gudmak}, equation \eqref{eq:1dim} has at least one $T$-periodic solution $\bar x(t)$;
moreover, the set of all $T$-periodic solutions of \eqref{eq:1dim} 
is of the form $\{\bar x+c: c\in [\bar a,\bar b]\}$ 
where $[\bar a,\bar b]$ is a suitable closed interval. 

We claim now that there exist $t_a,t_b\in[0,T)$ such that 
$\bar x(t_a)+\bar a=a(t_a)$ and $\bar x(t_b)+\bar b=b(t_b)$. 

To show this latter statement, we distinguish between two cases:
\begin{itemize}
 \item if $\dot{\bar x}\equiv 0$ and $\bar\epsilon=\max_{t\in[t_0,t_0+T]}b(t)-\bar x(t)-\bar b>0$, then it is easily 
shown that $\bar x(t)+\bar b+\epsilon$ is a $T$-periodic solution for every $0<\epsilon\leq \bar \epsilon$, 
in contradiction with the definition of $\bar b$. The case of the other boundary point is analogous.
 \item if $\dot{\bar x}\not\equiv 0$ then $\bar{x}$ must touch both endpoints
of the interval $[a(t),b(t)]$, since 
$\dot{\bar x}(t)<0$ only if $\bar x(t)=b(t)$, and $\dot{\bar x}(t)>0$ only if $\bar x(t)=a(t)$, hence the
claim is confirmed. 
\end{itemize} 

Let us now consider a generic solution $x(t)$ of \eqref{eq:1dim} with $x(0)\in[a(0),b(0)]$. 
If $x(0)\in[\bar x(0)+\bar a,\bar x(0)+\bar b]$, then $x(t)$ is automatically $T$-periodic for $t\geq 0$.

We now assume that $x(0)>\bar x(0)+\bar b$; the case $x(0)<\bar x(0)+\bar a$ can be treated analogously. By the discussion above we deduce that
\begin{equation*}
b(t_b)\geq x(t_b) \geq \bar x(t_b)+\bar b=b(t_b).
\end{equation*}
By forward uniqueness of solution we deduce that for every $t\geq t_b$ we have  $x(t) = \bar x(t)+\bar b$, hence it is $T$-periodic for $t\ge T$.
\end{proof}

\begin{corollary} \label{cor:cell} Let $C(t)$ be the $n$-cell $T$-periodic in time defined as
	\begin{equation*}
	C(t)=\prod_{i=1}^n [a_i(t),b_i(t)]
	\end{equation*}
where $a_i(t)\leq b_i(t)$ are  Lipschitz continuous,  $T$-periodic real functions. Then every solution of the system
\begin{equation}\label{eq:cube}
	\dot x \in -\NN_{C(t)}(x)
\end{equation}
 with initial condition $x(0)=(x_1(0),\dots,x_n(0))\in C(0)$ is $T$-periodic in $[T,+\infty)$.
\end{corollary}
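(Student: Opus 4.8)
The plan is to reduce Corollary~\ref{cor:cell} to the one-dimensional result, Theorem~\ref{th:1dim}, by exploiting the product structure of the moving set $C(t)$. The key observation is that the normal cone of a Cartesian product decomposes as a product of normal cones: for $x=(x_1,\dots,x_n)\in C(t)=\prod_i[a_i(t),b_i(t)]$ one has
\begin{equation*}
\NN_{C(t)}(x)=\prod_{i=1}^n \NN_{[a_i(t),b_i(t)]}(x_i).
\end{equation*}
Consequently the differential inclusion \eqref{eq:cube} splits into $n$ independent scalar inclusions
\begin{equation*}
\dot x_i\in -\NN_{[a_i(t),b_i(t)]}(x_i),\qquad i=1,\dots,n,
\end{equation*}
each with admissible initial condition $x_i(0)\in[a_i(0),b_i(0)]$.

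First I would verify this decomposition carefully: since the constraints defining $C(t)$ are separated (the $i$-th pair of inequalities involves only the coordinate $x_i$), the active constraint set $J(t,x)$ is the disjoint union of the per-coordinate active sets, and the gradient vectors $\pm e_i$ live in distinct coordinate directions, so the cone generated by the active gradients is exactly the product of the one-dimensional cones. This is elementary but should be stated, perhaps citing the separated form of \eqref{eq:activeconstr}. Alternatively, one can argue via the distance function: $\dist(x,C(t))^2=\sum_i \dist(x_i,[a_i(t),b_i(t)])^2$, and the sweeping process is the steepest-descent flow for $\tfrac12\dist(\cdot,C(t))^2$, which likewise separates.

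Next, having the splitting, I would apply Theorem~\ref{th:1dim} to each scalar component: each $x_i(t)$ is $T$-periodic on $[T,+\infty)$. Since the full solution is $x(t)=(x_1(t),\dots,x_n(t))$ and each component satisfies $x_i(t+T)=x_i(t)$ for $t\ge T$, we immediately conclude $x(t+T)=x(t)$ for all $t\ge T$, which is exactly the claim. One should also remark that forward-in-time existence and uniqueness for \eqref{eq:cube} (guaranteed since $C(t)$ is a polyhedron satisfying \ref{hyp:C1}, \ref{hyp:C2}, and trivially \ref{hyp:C3}, as the gradient vectors within any active set are linearly independent) ensures that the coordinatewise-constructed solution is \emph{the} solution, so no ambiguity arises.

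I do not expect a genuine obstacle here: the only point requiring care is the normal-cone product formula, and even that is standard convex analysis. The corollary is essentially a packaging statement, and the substance lies entirely in Theorem~\ref{th:1dim}, which has already been proved. If anything, the mild subtlety is bookkeeping the two regimes ($\dot{\bar x}\equiv 0$ versus $\dot{\bar x}\not\equiv 0$) inherited from the scalar proof, but these are handled there and need not be revisited.
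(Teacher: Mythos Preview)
Your proposal is correct and follows exactly the same approach as the paper: the paper's proof simply observes that \eqref{eq:cube} decouples into the $n$ independent one-dimensional sweeping processes $\dot x_i\in -\NN_{[a_i(t),b_i(t)]}(x_i)$ and then invokes Theorem~\ref{th:1dim} on each component. Your additional remarks on the normal-cone product formula and on forward uniqueness are reasonable justifications of that decoupling, but the paper regards this step as immediate and does not elaborate.
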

\begin{proof}
The corollary follows directly by Theorem \ref{th:1dim} by observing that \eqref{eq:cube} is equivalent 
to the $n$ uncoupled one-dimensional sweeping processes
\begin{equation*}
\dot x_i\in -\NN_{[a_i(t),b_i(t)]}(x_i) \qquad \qquad i=1,\dots,n
\end{equation*}
\end{proof}

Theorem \ref{th:1dim} can be applied to our locomotion model in the case $N=1$ of a single segment, corresponding to the case of inching locomotion (cf.~\cite[Sec.~5.2]{Gid18}). Hence with deduce that in this situation, for every admissible gait we have convergence to a periodic behaviour within the first period of actuation.

The case $N>1$ cannot be applied to our models, since $K(t)$ in general is not a Cartesian product of intervals. 

\subsection{First counterexample: acute angle and small movements} \label{ex:small}

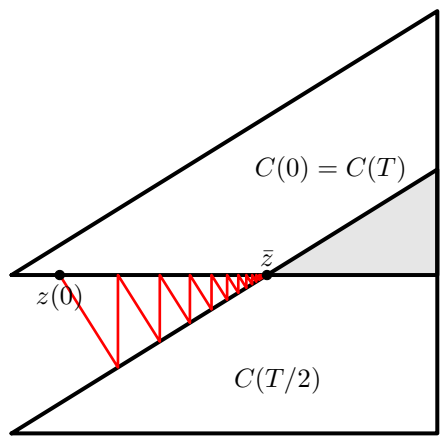
\begin{figure}[h]
	\centering
	\begin{tikzpicture}[line cap=round,line join=round,>=triangle 45,x=1cm,y=1cm, scale=0.7]
		\clip(-2.,-3.55) rectangle (10.,5.5);
		\fill[gray!20] (4.8,0)-- (8,2) -- (8,0)-- (4.8,0);
		\draw [line width=1.5pt] (0,0)-- (8,5)-- (8,0) -- (0,0);
		\draw [line width=1.5pt] (0,-3)-- (8,2) -- (8,-3)-- (0,-3);
		\draw [line width=1pt,red] (0.906,0)-- (2,-1.75)-- (2,0)-- (2.786,-1.259)-- (2.786,0)-- (3.352,-0.905)-- (3.352,0)-- (3.758,-0.651)-- (3.758,0)-- (4.051,-0.468)-- (4.051,0)-- (4.261,-0.337)-- (4.261,0)-- (4.412,-0.2425)-- (4.412,0)-- (4.521,-0.174)-- (4.521,0)-- (4.599,-0.126)-- (4.599,0)-- (4.656,-0.09)-- (4.656,0)-- (4.696,-0.065)-- (4.696,0)-- (4.725,-0.047)-- (4.725,0);
		\draw [fill=black] (0.906,0) circle (2.5pt) node[below] {$z(0)$};
		\draw [fill=black] (4.8,0) circle (2.5pt) node[above] {$\bar z$};
		\draw (6,2) node {$C(0)=C(T)$};
		\draw (5,-2) node {$C(T/2)$};
	\end{tikzpicture}
	\caption{The orbit (red) of a solution $z(t)$ of counterexample \ref{ex:small}, converging only asymptotically to a constant solution $\bar z$. The set $C(t)$ is periodically sliding vertically. Notice  that the solution will stop for progressively longer intervals at the edges of the orbit. The set $\mathcal{Z}$ of periodic solution is the set of constant function with value in the light-gray triangle.}
	\label{fig:exsmall}
\end{figure}

Let us consider the set $C_0\subset \R^2$ defined, for some positive $\alpha$ and $\beta$, as
\begin{equation*}
	C_0=\big\{(z_1,z_2)\in\R^2 : 0\leq z_2\leq \alpha z_1, z_1\leq \beta\big\}.
\end{equation*}
We remark that the constraint $z_1\leq \beta$ is actually irrelevant for our purposes, and it is only added to 
obtain a compact set with a structure compatible with the framework of Section \ref{sec:model}.
We define the $T$-periodic function $l\colon \R\to \R^2$, with $2\beta>T\alpha$, as
\begin{equation}
	l(t)=\begin{cases}
		(0,t) & \text{for $t\in[0,T/2]$}\\
		(0,T/2-t) & \text{for $t\in[T/2,T]$}
	\end{cases}
\end{equation}
and set \begin{equation}
	C(t):=C_0-l(t).
\end{equation}
We now consider the sweeping process 
\begin{equation}
	\dot z\in -\NN_{C(t)}(z)
\end{equation}
and focus on the Poincaré map $\PP_T$ at time $T$, namely the map that assigns to every state in $C_0$ at time $t=0$ 
it corresponding state at time $T$; in particular we focus on the admissible initial points with $z_2(0)=0$. 
Setting $\alpha\gamma:=T/2$ and requiring $\beta>\gamma$, we have
\begin{equation}
	\PP_T((z_1,0))=\begin{cases}
		(\frac{\alpha^2\gamma+z_1}{\alpha^2+1},0)  & \text{for $0\leq z_1<\gamma$}\\
		(z_1,0) & \text{for $\gamma\leq z_1\leq \beta$}
	\end{cases}
\end{equation}
As a consequence, it is easy to see that, for every choice of $0\leq z_1(0)<\gamma$, the sequence $z_1(nT)$ with 
$n\in \N$ is strictly increasing monotone and converges asymptotically to $\gamma$. In other words, all the 
orbits starting at $t=0$ from $(z_1(0),0)$ with $0\leq z_1(0)<\gamma$, are not periodic, but converge asymptotically 
to the periodic orbit starting from $(\gamma, 0)$ at $t=0$. An example is illustrated in Figure \ref{fig:exsmall}.

Notice that this example can be adapted to any situation where a moving set presents an acute angle. Hence it also applies to our locomotion models, for instance in the case of two links with constant anisotropic friction (cf.~\cite{GidDeS17}).
Unfortunately this example would not be particularly relevant in terms of application, since it converges to a constant solution (i.e. shape of the crawler), hence with zero asymptotic average velocity. For a two-link crawler, concretely this corresponds to the forward (or rear) contact point being steady, while the other two make smaller and smaller intermittent adjusting  movements towards a stable configuration.

\subsection{Second counterexample: triangle with wide movements} \label{ex:large}

\begin{figure}[h]
	\centering
	\begin{tikzpicture}[line cap=round,line join=round,>=stealth,x=5cm,y=5cm,line width=1pt,scale=0.5]
		\clip(-1.5,-0.5) rectangle (1.5,2);
		\draw [line width=1.5pt,->] (0.5125998066968301,0.8563839355163796)-- (0.7724074278321621,1.0063839355163795) node[right]{$\nu_2$};
		\draw [line width=1.5pt,->] (0.018166310996777124,0)-- (0.018166310996777124,-0.3) node[right]{$\nu_1$};
		\draw [line width=1.5pt,->] (-0.4762671847032755,0.8563839355163795)-- (-0.7360748058386073,1.0063839355163795) node[left]{$\nu_3$};
		\draw [line width=1.5pt] (0.018166310996777623,1.712767871032759)-- (-0.9707006804033285,0)node[left]{$P$};
		\draw [line width=1.5pt] (-0.9707006804033285,0)-- (1.0070333023968827,0)node[right]{$Q$};
		\draw [line width=1.5pt] (1.0070333023968827,0)-- (0.018166310996777623,1.712767871032759)node[above]{$R$};
		\draw [->,red] (0.35178097425805393,0.9541757489324312)-- (0.351780974258054,0);
		\draw [->,red] (0.351780974258054,0)-- (0.8432202203621757,0.28373258102856125);
		\draw [->,red] (0.8432202203621757,0.28373258102856125)-- (-0.39436064368592183,0.99825022603066);
		\draw [->,red] (-0.39436064368592183,0.99825022603066)-- (-0.39436064368592183,0);
		\draw [->,red] (-0.39436064368592183,0)-- (0.6566848158761818,0.6068213790087141);
		\draw [->,red] (0.6566848158761818,0.6068213790087141)-- (-0.30109294144292475,1.159794625020737);
		\draw [->,red] (-0.30109294144292475,1.159794625020737)-- (-0.3010929414429248,0);
		\draw [->,red] (-0.3010929414429248,0)-- (0.6800017414369311,0.5664352792611951);
		\draw [->,red] (0.6800017414369311,0.5664352792611951)-- (-0.3127514042232994,1.139601575146977);
		\draw [->,red] (-0.3127514042232994,1.139601575146977)-- (-0.31275140422329944,0);
		\draw [->,red] (-0.31275140422329944,0)-- (0.6770871257418373,0.5714835417296349);
		\draw [->,red] (0.6770871257418373,0.5714835417296349)-- (-0.31129409637575267,1.1421257063811971);
		\draw [fill=black] (0.35178097425805393,0.9541757489324312) circle (1.5pt) node[above left] {$x(0)$};
	\end{tikzpicture}
	\caption{The orbit (red) of a solution $z(t)$ of counterexample \ref{ex:large}, converging only asymptotically to a periodic solution.}
	\label{fig:equilateral}
\end{figure}
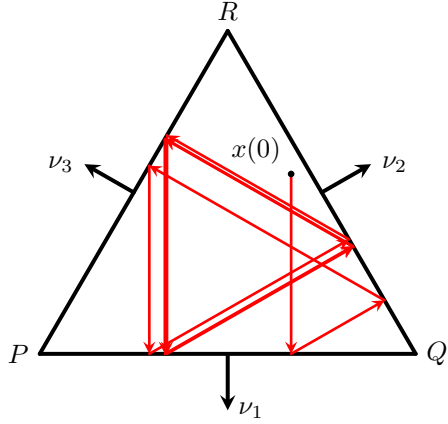

Let us consider the equilateral triangle $\widehat{PQR}$ shown in Figure \ref{fig:equilateral}. 
We denote with $\nu_1,\nu_2,\nu_3$ the unit normal vectors to the edges $\overline{PQ}$, $\overline{QR}$ and $\overline{RP}$.
We introduce the $T$-periodic function $f\colon\R\to \R^2$ defined by 
\begin{equation*}
f(t)=
\begin{cases}\alpha \nu_1 &\text{for $0\leq t<\frac{T}{3}$}\\
\alpha \nu_2 &\text{for $\frac{T}{3}\leq t<\frac{2T}{3}$}\\
\alpha \nu_3 &\text{for $\frac{2T}{3}\leq t<T$}
\end{cases}
\end{equation*}
and extended by periodicity. We remark $f$ has zero average on each period since $\nu_1+\nu_2+\nu_3=0$, hence any of its primitive $F(t)$ is $T$-periodic.
We consider the perturbed sweeping process
\begin{equation} \label{eq:equilateral}
\dot x\in -\NN_{\widehat{PQR}}(x)+f(t)
\end{equation}
which corresponds, up to a periodic change of coordinates, to the classical sweeping process with moving set $C(t)=\widehat{PQR}-F(t)$.
It is easily shown that, for $\alpha T$ sufficiently large, the dynamics \eqref{eq:equilateral} sends every 
initial state $x(0)$ to the segment $\overline{PQ}$ within the first third of the period. Then in the second third 
of the period all the orbits are sent to the segment $\overline{QR}$, which in turn is sent to the segment $\overline{RP}$ in the last third of the period.
Hence, to characterize the dynamics for $t>T/3$ it is sufficient to look at such projection from an edge to the following one. 
Parametrizing each segment as $[0,1]$, such projection is given by the map $\phi\colon[0,1]\to [0,1]$ defined as
\begin{equation*}
\phi(z)=\frac{1-z}{2}
\end{equation*}
Hence the Poincaré map with respect to a period, sending the segment $\overline{PQ}$ to itself is given by $\phi^3$. 
We observe that this map has a single fixed point, corresponding to a single periodic solution of \eqref{eq:equilateral}, 
whereas all the other orbits of $\phi^3$ converge asymptotically to such fixed point, but not in finite time.

We remark that the same construction still holds if we consider a general acute  triangle $\widehat{PQR}$, obviously with three different projection $\phi_i$. However, the idea is no longer applicable if the triangle is rectangle or obtuse: it can be easily verified that in such situation we obtain instead, always for large $\alpha T$, convergence within the first period. This latter case seems to be the one occurring with the set $K(t)$ in our models.


\end{document}